\documentclass[12pt]{amsart}

\usepackage{latexsym}
\usepackage{amsthm}
\usepackage{amsmath}
\usepackage{amssymb}
\usepackage{amsbsy}
\usepackage{hyperref}
\numberwithin{equation}{section}

\theoremstyle{plain}
\newtheorem{thm}{Theorem}[section]
\newtheorem{lem}[thm]{Lemma}
\newtheorem{prop}[thm]{Proposition}
\newtheorem{cor}[thm]{Corollary}

\def \RR{{\mathcal R}}
\def \mm{{\mathfrak m}}
\def \reg{{\text{reg }}}

\def \deg{{\text{deg}}}
\def \ord{{\text{ord}}}

\def \dim{{\text{dim}}}
\def \Proj{{\text{Proj}}}
\def \PP{{\mathbb{P}}}
\def \ZZ{{\mathbb{Z}}}
\def \NN{{\mathbb{N}}}
\def \aa{{\bf{a}}}

\def \dd{{\bf{d}}}
\def \xx{{\bf{x}}}

\def \11{{\bf{1}}}

\title{Regularity defect stabilization of powers of an ideal} 
\author{David Berlekamp}
\address{Department of Mathematics, University of California, Berkeley, CA 94720-3840, USA}
\email{daffyd@math.berkeley.edu}
\subjclass[2010]{13D02, 13A15, 13C99, 13P20}
\date{\today}
\begin{document}

\begin{abstract}
When $I$ is an ideal of a standard graded algebra $S$ with homogeneous maximal ideal $\mm$, it is known by the work of several authors that the Castelnuovo-Mumford regularity of $I^m$ ultimately becomes a linear function $dm + e$ for $m \gg 0$.  

We give several constraints on the behavior of what may be termed the \emph{regularity defect} (the sequence $e_m = \reg I^m - dm$) in various cases.  When $I$ is $\mm$-primary we give a family of bounds on the first differences of the $e_m$, including an upper bound on the increasing part of the sequence; for example, we show that the $e_i$ cannot increase for $i \geq \dim(S)$.  When $I$ is a monomial ideal, we show that the $e_i$ become constant for $i \geq n(n-1)(d-1)$, where $n = \dim(S)$.  
\end{abstract}

\maketitle

\section{Introduction}

When $I$ is an ideal of a standard graded algebra $S$ with homogeneous maximal ideal $\mm$, it is known by the work of several authors that the Castelnuovo-Mumford regularity of $I^m$ ultimately becomes a linear function $dm + e$ for $m \gg 0$ (c.f. \cite{CHT99}, \cite{Kod00}, \cite{TW05}).  The leading coefficient $d$ is the asymptotic generating degree of $I$, i.e. the minimal number such that $I$ is integral over $I_{\leq d}$, where $I_{\leq d}$ denotes the ideal generated by the forms in $I$ of degree at most $d$.  The other coefficient, $e$, is more mysterious.  In the case where $I$ is $\mm$-primary and generated by general forms of a single degree, $I$ determines a finite morphism $\phi: \Proj(S) \to \PP^r$, and \cite{EH10} showed that $e+1$ is the maximum regularity of a fibre of $\phi$; following a conjecture of \cite{Ha10}, \cite{Char10} extended this result to the case in which $I$ is generated by arbitrary forms of a single degree in terms of the associated map from the blowup of $\Proj(S)$ along the subscheme defined by $I$.  This is quite a tantalizing phenomenon, but so far no similar interpretation has presented itself in a more general case.  

We define the \emph{regularity defect sequence} (or, shortly, the \emph{defect}) to be the sequence $e_m = \reg I^m - dm$, and the \emph{asymptotic} or \emph{stable defect} $e_{\infty}$ to be $e_m$ for $m \gg 0$..  Again in the case where $I$ is $\mm$-primary and generated by forms of a single degree, \cite{EH10} showed that the defect sequence is weakly decreasing, and \cite{EU10} gave an explicit bound on the power $m$ at which the defect sequence stabilizes, no more than the $(0,1)$-regularity of the Rees algebra $\RR(I)$ (see \cite{EU10}); they also explored some examples in the case where $I$ may not be generated in a single degree, though the situation there thus far eludes clear characterization.  M. Chardin has announced a related result giving a criterion for stabilization of the defect; specifically, that $e_{m+1}=e_m$ if $m$ is at least $\text{reg}_{k[V]}(\RR(I)_{e_{\infty},*})$, where $V \subseteq I_d$ is a subspace such that $V+\mm I_{d-1} = I_d$, and also
$$t\geq \min \left( \min\{ m | I^m \subseteq I_{\leq d}\}, \frac{\reg(I_{\leq d}) - d}{d+1} \right)$$
This nicely parallels the stabilization theorem of \cite{EU10} in the equigenerated case, in which the fact that $I = I_{\leq d}$ renders the latter requirement trivial.  
The bound therein is much like one of those of our Theorem~\ref{thm:bd} ensuring that the defect sequence is weakly decreasing.  In the general case, despite this progress, many properties of the regularity defect remain mysterious.  

In this note, we give several constraints on the behavior of the defect sequence.  In particular, we show that when $I$ is $\mm$-primary and $S$ has characteristic $0$, the defect sequence is weakly decreasing for $m$ larger than $(\reg I_{\leq d})/(d+b')$, where $d+b'$ is the degree of a minimal generator of $I$ of minimal degree strictly greater than $d$ (if no such generator exists, as is for instance the case when $I$ is generated in a single degree, the defect sequence is weakly decreasing, recovering a result of \cite{EH10}).  We observe that this bound is strictly smaller than the dimension of $S$.  We also show that all first differences of the defect sequence are bounded above by $b$, where $d+b$ is the maximal degree of a minimal generator of $I$, and give an interpolating set of increasingly powerful bounds on the first differences of the defect sequence kicking in at larger values of $m$.  These bounds relate to a question of \cite{EU10} as to whether the sequence of first differences is weakly decreasing; such is not the case once the defect sequence has begun to decrease (witness examples in Section~\ref{sec:ex}), but seems to be so for the increasing part.  All of this comprises Section~\ref{sec:diff} of this paper.  

In Section~\ref{sec:mon}, we turn to the case where $I$ is an $\mm$-primary monomial ideal.  In this case, we observe that if $n$ is the number of variables of $S$, then the minimal generators of $I$ include a regular sequence of pure powers of the variables, and $d$ is the maximum pure power to appear.  We are able to show that if $l \leq n$ is the number of variables appearing to the power $d$ as minimal generators of $I$, then the regularity defect sequence achieves its asymptotic value for all $m$ at least $l(n-1)(d-1)$; a somewhat sharper result requiring more apparatus to articulate is also presented.   

Finally, in Section~\ref{sec:ex} we turn to the computation of some specific examples exploring the precision of the bounds we have obtained.

\section{First differences of the regularity defect sequence}
\label{sec:diff}

Fix a polynomial ring $S = k[\xx] = k[x_1,\ldots,x_n]$.  For $\aa \in \NN^n$, let $\xx^{\aa} = x_1^{a_1}\cdots x_n^{a_n}$, and let $\mm^{\aa} = (x_1^{a_1}, \ldots, x_n^{a_n})$.  We also write $\11=(1,\ldots,1)$, so $\11\cdot \aa = \sum a_i$ is the total ($\ZZ$-)degree of $\xx^{\aa}$.  
Let $$\sigma(I) = \{\alpha \in S/I : \alpha \neq 0, \mm \alpha =0 \}$$  i.e., $\sigma(I)$ is the set of nonzero elements of the socle of $S/I$.  
In this situation, we define $\omega(I)\subset \sigma(I)$ to be the set of maximal total degree elements of $\sigma(I)$.  We also define $s(I)$ and $w(I)$ to be the sets of homogeneous elements of $S$ mapping to $\sigma(I)$ and $\omega(I)$ respectively under the quotient.  Observe that if $S/I$ is artinian and $f \in w(I)$, then $\deg(f) = \reg S/I = \reg I -1$.  We call $w(I)$ the \emph{witness set} of $I$, and a form $f \in w(I)$ a \emph{witness for (the regularity of)} $I$.  

\begin{thm}
\label{thm:1diff}
Let $I \subset S$ be any $\mm$-primary homogeneous ideal of asymptotic generating degree $d$, and let $\reg I^m = dm + e_m$.  Let $J \subset I$ be another $\mm$-primary homogeneous ideal, and let $c$ be the maximal degree of a minimal generator of $J$.  If $J \cap w(I^m) \neq \varnothing$, then $\reg I^m - \reg I^{m-1} \leq c$, and hence $$e_m - e_{m-1} \leq c-d.$$
\end{thm}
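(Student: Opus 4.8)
The plan is to run the argument at the level of a single witness. The hypothesis hands us a homogeneous form $f \in w(I^m) \cap J$, and since $I$ is $\mm$-primary the quotient $S/I^m$ is artinian, so $f$ is genuinely a witness for $I^m$: $f \notin I^m$ and $\deg f = \reg I^m - 1$. The goal is then to descend, i.e. to manufacture from $f$ a homogeneous form of controlled degree lying outside $I^{m-1}$, which will bound $\reg I^{m-1}$ below in terms of $\reg I^m$. (For $m = 1$ the hypothesis is vacuous, since $J \subseteq I$ forces $J \cap w(I) = \varnothing$; so we may assume $m \geq 2$, and then $I^{m-1}$ is again $\mm$-primary.)

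First I would record the elementary fact that, $S/I^{m-1}$ being artinian, $\reg(S/I^{m-1})$ equals the top degree in which $S/I^{m-1}$ is nonzero; combined with the exact sequence $0 \to I^{m-1} \to S \to S/I^{m-1} \to 0$ and $\reg S = 0$, this says $\reg I^{m-1} - 1 \geq \deg h$ for every homogeneous $h \notin I^{m-1}$. So it suffices to produce such an $h$ with $\deg h \geq \deg f - c$.

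To do this, I would write $f = \sum_j g_j h_j$ with the $g_j$ the minimal generators of $J$ and each $h_j$ homogeneous of degree $\deg f - \deg g_j$ (discarding components of other degrees). Since $I^m$ is an ideal and $f \notin I^m$, at least one term $gh := g_j h_j$ satisfies $gh \notin I^m$; here $g$ is a minimal generator of $J$, so $\deg g \leq c$ and $\deg h = \deg f - \deg g \geq \deg f - c$, and $gh \neq 0$ forces $h \neq 0$. Now comes the one observation that makes the hypothesis $J \cap w(I^m) \neq \varnothing$ do real work: $g \in J \subseteq I$, hence $g\,I^{m-1} \subseteq I^m$, so if $h$ were in $I^{m-1}$ we would get $gh \in I^m$, a contradiction. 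Thus $h \notin I^{m-1}$, and therefore $\reg I^{m-1} - 1 \geq \deg h \geq \deg f - c = \reg I^m - 1 - c$, i.e. $\reg I^m - \reg I^{m-1} \leq c$. Substituting $\reg I^m = dm + e_m$ and $\reg I^{m-1} = d(m-1) + e_{m-1}$ turns this into $d + (e_m - e_{m-1}) \leq c$, which is exactly the claimed bound $e_m - e_{m-1} \leq c - d$. I expect no serious obstacle here; the only things to keep straight are the degree bookkeeping in the factorization of $f$ and the standard identification of the regularity of an artinian module with its top nonvanishing degree.
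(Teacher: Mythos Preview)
Your proof is correct and follows essentially the same route as the paper: pick a witness $f\in J\cap w(I^m)$, expand it in minimal generators of $J$, and use $J\subseteq I$ to force one of the coefficients to lie outside $I^{m-1}$, bounding $\reg I^{m-1}$ from below. The only cosmetic difference is that the paper argues the contrapositive directly (``if every coefficient were in $I^{m-1}$ then $f\in I^m$''), whereas you pass through the intermediate observation that some individual summand $g_jh_j$ lies outside $I^m$; both reach the same conclusion, and your explicit handling of the vacuous $m=1$ case is a small bonus.
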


\begin{proof}
Let $f \in J\cap w(I^m)$.  By definition, as $f\in w(I^m)$ we have $f \notin I^m$ and $\deg(f) = \reg S/I^m = dm + e_m -1$.

As $f \in J$, write $f = \sum u_ig_i$, where the $g_i$ are minimal generators of $J$ and the $u_i$ are some forms, such that no summand can be discarded.  As $f \notin I^m$, not all of the $u_i$ lie in $I^{m-1}$; without loss of generality assume $u_1 \notin I^{m-1}$.  Then $\deg(u_1) \leq \reg S/I^{m-1}$, so $$\reg I^m = \deg(f) +1= \deg(u_1) + \deg(g_1) +1 \leq \reg I^{m-1} + c.$$  

Hence $$e_m - e_{m-1} = \reg I^m - (\reg I^{m-1} + d) \leq c-d.$$
\end{proof}

\begin{cor}
\label{cor:interp}
Let $J\subset I$ be $\mm$-primary homogeneous ideals as above, with $d$ the asymptotic generating degree of $I$, $\reg I^m = dm + e_m$, and $c$ the maximal degree of a minimal generator of $J$.  Whenever $\reg I^m > \reg J$, we have $$e_m - e_{m-1} \leq c-d.$$
\end{cor}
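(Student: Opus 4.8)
The plan is to deduce this from Theorem~\ref{thm:1diff} by showing that the hypothesis $\reg I^m > \reg J$ forces the witness set $w(I^m)$ to meet $J$. First I would recall that since $I$ and $J$ are both $\mm$-primary, the quotients $S/I^m$, $S/J$ and $S/(J+I^m)$ are all artinian, so regularity is just one more than the top nonvanishing degree. Pick any witness $f \in w(I^m)$; by definition $\deg(f) = \reg S/I^m = \reg I^m - 1 > \reg J - 1 = \reg S/J \geq \reg S/(J+I^m)$. The key point is that a homogeneous element whose degree exceeds $\reg S/(J+I^m)$ must already vanish in $S/(J+I^m)$, i.e. it lies in $J + I^m$; in fact even better, in an artinian quotient everything of degree $> \deg$-of-socle lies in the ideal, so $f \in J + I^m$.

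The next step is to extract from $f \in J + I^m$ an actual witness lying in $J$ itself. Write $f = f_J + f_m$ with $f_J \in J$ and $f_m \in I^m$, both homogeneous of degree $\deg(f)$. Since $f \notin I^m$ (as $f$ is a witness for $I^m$) but $f_m \in I^m$, we get $f_J = f - f_m \notin I^m$; and $\deg(f_J) = \deg(f) = \reg S/I^m$, since $f_J$ is nonzero (else $f = f_m \in I^m$, contradiction). So $f_J$ is a homogeneous element of $J$, not in $I^m$, of degree exactly $\reg S/I^m$ — but that is precisely the condition for $f_J$ to represent an element of $\sigma(I^m)$ of top total degree, i.e. $f_J \in w(I^m) \cap J$. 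Hence $J \cap w(I^m) \neq \varnothing$.

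With that established, Theorem~\ref{thm:1diff} applies directly with this $J$ (whose maximal minimal-generator degree is $c$), yielding $\reg I^m - \reg I^{m-1} \leq c$ and therefore $e_m - e_{m-1} \leq c - d$. The only subtlety I anticipate is the claim that a homogeneous element of an artinian quotient whose degree strictly exceeds the regularity (equivalently, the top degree where the quotient is nonzero) must lie in the defining ideal — but this is immediate from the definition of $\reg$ for artinian modules, so the argument above should go through without obstacle. One should double-check that $f_J$ is genuinely nonzero in $S$ as written; this is forced because $f \notin I^m \ni f_m$. I would present the short argument essentially as above, perhaps phrasing the passage from $f$ to $f_J$ as: "replacing $f$ by its $J$-component modulo $I^m$, we may assume $f \in J$."
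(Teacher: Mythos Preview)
Your argument is correct and follows essentially the same route as the paper: show that $w(I^m)$ meets $J$, then invoke Theorem~\ref{thm:1diff}. However, you take an unnecessary detour through $J+I^m$ and the decomposition $f=f_J+f_m$. You already observed that $\deg(f)=\reg I^m-1>\reg J-1=\reg S/J$, and since $S/J$ is artinian this immediately gives $f\in J$; thus in fact $w(I^m)\subset J$, which is exactly what the paper records in a single line before applying Theorem~\ref{thm:1diff}.
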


\begin{proof}
$\reg I^m > \reg J$ implies that $w(I^m) \subset J$; the result follows by Theorem~\ref{thm:1diff}.  
\end{proof}

Setting $J=I$ recovers the result of \cite{EU10} that the defect sequence is weakly decreasing in the equigenerated case, and also extends it to the case where all generators have degree at most $d$.  

Regularity is weakly increasing in powers (simply because $I^{m+1} \subset I^m$).  However, regularity often fails to strictly increase in products; for example, if $I,J$ are distinct ideals of the same regularity, it is not generally the case that the regularity of the product will be larger.  For example, if $n=4$, $I$ is generated by the squares of the variables, and $J$ is generated by squares of four generic linear forms, then $\reg I = \reg J = \reg IJ = 5$.  

This observation leads to some more general questions:  If $n \geq 4$ and $I,J$ are generated by distinct generic regular sequences of the same degree $d\geq 2$, is it the case that $\reg IJ = \reg I = \reg J$? Is the same true if $I$ is generated by pure powers of the variables and $J$ by pure powers of generic linear forms?  Experimental evidence indicates an affirmative answer, i.e. that a generic form $f$ of the appropriate degree satisfies $f \in w(I) \cap w(J)$ and $\mm f \in IJ$, so $\reg I = \reg J = \reg IJ$.  A heuristic justification for the significance of $n=4$ may be given by the following argument: Let $I$, $J$ be given by distinct maximal regular sequences of degree $d$, so $\reg I = \reg J = n(d-1)+1 =: q$.  Then as any form in $w(I) + w(J)$ cannot be in $IJ$, $\reg IJ = \reg I$ if and only if $\mm^q \subset IJ$.  Observing that $(IJ)_q = I_dJ_{q-d}$, this requires $\dim_k(\mm^q) \leq n\dim_k(J_{q-d})$.  As $S/J$ is Gorenstein with socle in degree $q-1$, $\dim_k((S/J)_{q-d}) = \dim_k((S/J)_{d-1})= \dim_k(\mm^{d-1})$, and $\dim_k(J_{q-d}) = \dim_k(\mm^{q-d}) - \dim_k((S/J)_{q-d})$; hence the requirement on $n$ and $d$ comes down to $${{nd}\choose{n-1}} \leq n\left[ {{(n-1)d}\choose{n-1}} - {{d+n-2}\choose{n-1}}\right]$$
and this is true for $n\geq 4$, $d\geq 2$ (but not for $n=2$ nor $n=3$ for any $d$).  Not many lower bounds on the regularity of products are known, and an answer to this question in general would be a fascinating result.

Nonetheless, with a bit more work we can show that the regularity is in fact strictly increasing in powers (at least in characteristic $0$).  

\begin{prop}
\label{prop:regup}
Let $I_1,\ldots, I_m$ be homogeneous ideals of $S$, with a form $f \in S$ such that $$\mm f \in \prod_iI_i$$ and assume the characteristic of $S$ does not divide $n +\deg(f)$.  Then $$f \in \sum_i \prod_{j\neq i} I_j.$$
\end{prop}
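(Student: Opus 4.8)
The plan is to exploit partial differentiation together with Euler's identity; the characteristic hypothesis will enter only at the last line. Write $\partial_k = \partial/\partial x_k$.

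First I would record the elementary formal observation that drives everything: for any homogeneous ideals $J_1,\ldots,J_m$ of $S$ and any index $k$, the derivation $\partial_k$ carries $J_1\cdots J_m$ into $\sum_i\prod_{j\neq i}J_j$. Indeed, an element of $J_1\cdots J_m$ is a finite sum of products $a_1\cdots a_m$ with $a_i\in J_i$, and the Leibniz rule gives $\partial_k(a_1\cdots a_m)=\sum_i(\partial_k a_i)\prod_{j\neq i}a_j$, each summand of which lies in $\prod_{j\neq i}J_j$ because $\partial_k a_i\in S$ while $\prod_{j\neq i}a_j\in\prod_{j\neq i}J_j$.

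Next I would unwind the hypothesis. The statement $\mm f\in\prod_i I_i$ says precisely that $x_k f\in I_1\cdots I_m$ for every $k=1,\ldots,n$. Applying $\partial_k$ and the observation above, $\partial_k(x_k f)=f+x_k\,\partial_k f\in\sum_i\prod_{j\neq i}I_j$. Summing over $k$ yields $nf+\sum_k x_k\,\partial_k f\in\sum_i\prod_{j\neq i}I_j$, and since $f$ is a form, Euler's identity (a purely formal relation, valid in every characteristic) gives $\sum_k x_k\,\partial_k f=(\deg f)\,f$. Hence $(n+\deg f)\,f\in\sum_i\prod_{j\neq i}I_j$. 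By hypothesis $n+\deg f$ is nonzero in $k$, hence a unit in $S$, and we conclude $f\in\sum_i\prod_{j\neq i}I_j$.

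There is essentially no hard step here once one thinks to differentiate; the only points deserving a word of care are the stability of a product of ideals under a single $\partial_k$ up to dropping one factor, and the fact that Euler's identity holds irrespective of characteristic, so that the hypothesis on $\char S$ is used solely to invert $n+\deg f$ at the very end. The degenerate cases cause no trouble: when $m=1$ the target $\sum_i\prod_{j\neq i}I_j$ is the unit ideal, and when $f$ is a nonzero constant the hypothesis forces the relevant products to vanish unless $m=1$.
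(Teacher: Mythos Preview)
Your argument is correct and is essentially the same as the paper's: differentiate each relation $x_kf\in\prod_iI_i$ via the Leibniz rule to land in $\sum_i\prod_{j\neq i}I_j$, sum over $k$, and invoke Euler's identity to obtain $(n+\deg f)f$ in that sum. The only difference is cosmetic---you isolate the Leibniz step as a general lemma about $\partial_k(J_1\cdots J_m)$, whereas the paper writes it out on explicit generators---so nothing further to add.
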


\begin{proof}
Let $\mm = (x_1, \ldots, x_n)$.  By hypothesis, there are finitely many forms $g_{ik} \in I_i$ such that for $1\leq k \leq n$ we have $x_k f = \sum_k \prod_i g_{ik}$.  Then $$\sum_k [x_k f]_{x_k} = \sum_{k,i} [g_{ik}]_{x_k}\prod_{j \neq i} g_{jk} \in \sum_i \prod_{j\neq i} I_j$$
but also $$\sum_k [x_k f]_{x_k} = nf + \sum_k x_k[f]_{x_k} = (n + \deg(f))f$$
and thus $f \in \sum_i \prod_{j\neq i} I_j$.  
\end{proof}

Henceforth we assume that $S$ has characteristic $0$ so as to apply Proposition \ref{prop:regup}.  
\begin{prop}
\label{prop:sdisj}
Let $I$ be a homogeneous ideal of  $S$, and let $m > m' > 0$.  Then $s(I^m) \subset I^{m'}$.  
In particular, $s(I^m) \cap s(I^{m'}) = \varnothing$.  
\end{prop}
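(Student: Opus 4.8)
\emph{Proof proposal.}  The plan is to obtain this directly from Proposition~\ref{prop:regup}.  Fix a homogeneous element $f \in s(I^m)$; by definition of $s(I^m)$ this means (in particular) that $\mm f \subseteq I^m$.  Since $m > m' > 0$ we have $m \geq m'+1$, hence $I^m \subseteq I^{m'+1}$ and so $\mm f \subseteq I^{m'+1}$.  Now regard $I^{m'+1}$ as a product of $m'+1$ copies of $I$, i.e. set $I_1 = \cdots = I_{m'+1} = I$, so that $\mm f \in \prod_{i=1}^{m'+1} I_i$.

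Next I would check the hypothesis of Proposition~\ref{prop:regup}: it asks that $\char S$ not divide $n + \deg(f)$, and since we have assumed $\char S = 0$ this holds vacuously.  Applying the proposition therefore yields
$$f \in \sum_{i=1}^{m'+1} \prod_{j \neq i} I_j \;=\; \sum_{i=1}^{m'+1} I^{m'} \;=\; I^{m'},$$
which is precisely the first assertion.  (Note that this argument uses only the socle relation $\mm f \subseteq I^m$, not the fact that $f \notin I^m$.)

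For the ``in particular'' clause, suppose $g \in s(I^m) \cap s(I^{m'})$ with $m > m'$.  Membership in $s(I^{m'})$ forces $g \notin I^{m'}$ (elements of $s(\cdot)$ map to nonzero socle elements), while membership in $s(I^m)$ forces $g \in I^{m'}$ by the part just proved; this contradiction gives $s(I^m) \cap s(I^{m'}) = \varnothing$.

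I do not expect a genuine obstacle here, since the real content is packaged in Proposition~\ref{prop:regup}.  The only points requiring care are the exponent bookkeeping (verifying $m'+1 \leq m$ so that the inclusion $I^m \subseteq I^{m'+1}$ is legitimate, and that a product of $m'+1$ copies of $I$ omitting one factor is exactly $I^{m'}$) and the appeal to the standing characteristic-$0$ hypothesis, which is what makes the divisibility condition in Proposition~\ref{prop:regup} automatic; a characteristic-free version would instead have to proceed power by power while tracking whether $\char S \mid n + \deg(f)$ at each stage.
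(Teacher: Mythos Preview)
Your proposal is correct and is essentially the paper's own argument: both apply Proposition~\ref{prop:regup} with all $I_i$ equal to $I$ under the standing characteristic-$0$ assumption, and both deduce the disjointness from the trivial fact that $s(J)\cap J=\varnothing$. The only cosmetic difference is that the paper uses $m$ copies of $I$ to get $f\in I^{m-1}\subset I^{m'}$, whereas you first include $I^m\subset I^{m'+1}$ and then use $m'+1$ copies to land directly in $I^{m'}$.
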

\begin{proof}
Let $I_i = I$, $i = 1,\ldots, m$, and apply Proposition~\ref{prop:regup} to see that any $f \in s(I^{m})$ satisfies $f \in I^{m-1} \subset I^{m'}$.  For the last, observe that any homogeneous ideal $J$ is disjoint from $s(J)$.  
\end{proof}

\begin{lem}
\label{lem:wit}
Let $I,J$ be $\mm$-primary homogeneous ideals.  If there is a witness for $I$ that is neither in $J$ nor a witness for $J$, then $\reg I < \reg J$.  
\end{lem}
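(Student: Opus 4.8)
The plan is to reduce the statement to an elementary comparison of top nonvanishing degrees. Since $I$ and $J$ are $\mm$-primary, $S/I$ and $S/J$ are artinian, and the regularity of a finite-length graded module $M$ is simply $\max\{ j : M_j \neq 0\}$ (the only nonzero local cohomology being $H^0_{\mm}(M) = M$). So $\reg S/I = \reg I - 1$ is the top degree in which $S/I$ is nonzero, and likewise $\reg S/J = \reg J - 1$ for $J$; it therefore suffices to prove $\reg S/I < \reg S/J$. Let $f$ be the hypothesized witness for $I$, so that $f \notin I$, $\mm f \subseteq I$, and — by the observation immediately preceding the lemma — $\deg(f) = \reg S/I = \reg I - 1$.

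First I would note that $f \notin J$ forces $(S/J)_{\deg(f)} \neq 0$, hence $\reg S/J \geq \deg(f) = \reg S/I$, i.e. $\reg I \leq \reg J$; it remains only to exclude equality. So suppose $\reg S/I = \reg S/J$. Then $\deg(f) = \reg S/J$ is the top nonvanishing degree of $S/J$, so every element of $S$ of degree larger than $\deg(f)$ lies in $J$; in particular $\mm f \subseteq J$. Since also $f \notin J$, the image of $f$ in $S/J$ is a nonzero socle element, and because $\deg(f)$ is already the maximal degree attained anywhere in $S/J$, this socle element has maximal degree; thus $f \in w(J)$, i.e. $f$ is a witness for $J$, contradicting the hypothesis. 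Hence $\reg I < \reg J$.

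The whole argument is a degree count, so I anticipate no genuine obstacle; the only slightly delicate point is checking that, under the equality assumption, $f$ satisfies the maximality clause in the definition of the witness set $w(J)$, and this is immediate once one observes that $\deg(f) = \reg S/J$ is the largest degree in which $S/J$ is nonzero at all, so no homogeneous element of $S \setminus J$ can have larger degree. The remaining manipulations amount to bookkeeping with the short exact sequence $0 \to J \to S \to S/J \to 0$, already encapsulated in the relation $\reg J = \reg S/J + 1$ used throughout the excerpt.
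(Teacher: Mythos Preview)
Your proof is correct and follows essentially the same idea as the paper's: both hinge on the fact that $f \notin J$ forces $\reg I \leq \reg J$, with strictness coming from $f \notin w(J)$. The only difference is organizational --- you argue by contradiction (if $\reg I = \reg J$ then $\mm f \subseteq J$ by degree reasons, so $f$ would lie in $w(J)$), whereas the paper argues directly by exhibiting some $g \in \mm f \setminus J$ of degree $\reg I$.
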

\begin{proof}
Let $f \in w(I) \backslash (J \cup w(J))$.  Then $f \notin J$, and $\mm f \not\subset J$ as $f \notin w(J)$.  So let $g \in \mm f \backslash J$, and observe that $\reg I = \deg(f) + 1 = \deg(g) < \reg J$.  
\end{proof}

\begin{cor}
Let $I$ be an $\mm$-primary homogeneous ideal.  Then for all $m>0$, $\reg I^{m+1} > \reg I^m$.
\end{cor}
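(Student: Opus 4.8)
The plan is to deduce this from Lemma~\ref{lem:wit}, applied with the pair $(I^m, I^{m+1})$ in place of $(I,J)$: it then suffices to produce a witness for $I^m$ that is neither an element of $I^{m+1}$ nor a witness for $I^{m+1}$. Since $I^m$ is $\mm$-primary, $S/I^m$ is a nonzero artinian ring, so its socle is nonzero and of bounded degree; hence $\sigma(I^m)\neq\varnothing$, $\omega(I^m)\neq\varnothing$, and we may fix some $f \in w(I^m)$.

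Now I would verify the two required properties of $f$. First, $f \notin I^m$ (it maps to a nonzero socle element of $S/I^m$), and since $I^{m+1}\subseteq I^m$ this gives $f\notin I^{m+1}$. Second, I claim $f$ is not even in the socle of $S/I^{m+1}$, let alone a witness for $I^{m+1}$: if $f \in s(I^{m+1})$, then Proposition~\ref{prop:sdisj}, applied with $m+1 > m > 0$, would give $s(I^{m+1})\subseteq I^m$ and hence $f \in I^m$, contradicting the previous sentence. Therefore $f \in w(I^m)\setminus\bigl(I^{m+1}\cup w(I^{m+1})\bigr)$, and Lemma~\ref{lem:wit} yields $\reg I^m < \reg I^{m+1}$, as desired. (Alternatively one can argue directly, bypassing Lemma~\ref{lem:wit}: since $\mm f \not\subseteq I^{m+1}$ by Proposition~\ref{prop:regup}, pick $g \in \mm f \setminus I^{m+1}$ and note $\reg I^{m+1} \geq \deg g + 1 = \deg f + 2 = \reg I^m + 1$.)

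The main point is that this corollary carries essentially no new content of its own: the real work has been front-loaded into Proposition~\ref{prop:regup}, whose Euler-operator/partial-derivative identity is exactly what forces the socle of $S/I^{m+1}$ down into $I^m$ (and is also where the characteristic-zero hypothesis is spent). Once Proposition~\ref{prop:sdisj} is available, the only thing to notice is that its conclusion $s(I^{m+1})\subseteq I^m$ is precisely the separation hypothesis Lemma~\ref{lem:wit} requires in order to conclude a strict inequality of regularities; so within the proof of the corollary itself there is no real obstacle, only the choice of which earlier statement to invoke.
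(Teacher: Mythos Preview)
Your proof is correct and follows essentially the same approach as the paper: pick $f\in w(I^m)$, use Proposition~\ref{prop:sdisj} to see that $f\notin s(I^{m+1})\supseteq w(I^{m+1})$, note that $f\notin I^{m+1}$ since $f\notin I^m$, and invoke Lemma~\ref{lem:wit}. Your writeup is more detailed (you spell out $f\notin I^{m+1}$ and the reason $w(I^m)\neq\varnothing$) and offers a clean direct alternative via Proposition~\ref{prop:regup}, but the underlying argument is the same.
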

\begin{proof}
$\reg I^m = \deg f$, where $f$ is any element of $w(I^m)$; since $w(J) \subset s(J)$ for any $J$, $f \notin w(I^{m+1})$ by Proposition~\ref{prop:sdisj}.  Now use Lemma~\ref{lem:wit}.
\end{proof}

We can use these tools to get bounds on the increasing part of the regularity defect sequence.  

\begin{thm}  
\label{thm:bd}
Let $J\subset I$ be $\mm$-primary homogeneous ideals as above, with $d$ the asymptotic generating degree of $I$, $\reg I^m = dm + e_m$, and $c, c'$ respectively the maximal and minimal degrees of minimal generators of $J$.  Let $d'$ be the minimal degree of an element of $g(I) \backslash J$, where $g(I)$ denotes the set of minimal generators of $I$.  Whenever $m > \min\{(\reg J)/d,(\reg J)/d' + \max\{1 - c'/d',0\}\}$
we have $\reg I^m > \reg J$, and hence by Corollary~\ref{cor:interp}, $$e_m - e_{m-1} \leq c-d.$$

This holds for each $m$ such that $w(J) \not\subset s(I'^m) + w(I'^{m-1}J)$, where $I'$ is the ideal generated by $g(I) \backslash J$. 

\end{thm}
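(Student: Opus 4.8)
The plan is to prove the main claim---that $\reg I^m > \reg J$ whenever $m > \min\{(\reg J)/d,\ (\reg J)/d' + \max\{1-c'/d',0\}\}$---by producing a witness for $I^m$ that escapes $J \cup w(J)$, so that Lemma~\ref{lem:wit} applies. The first half of the bound is the easy one: if $md > \reg J$, then $\reg I^m \geq dm+e_m \geq dm - (\text{something bounded})$; more precisely, since $\reg I^m$ grows at least linearly with slope $d$ (and regularity is strictly increasing in powers by the Corollary above), once $m$ is large enough that $dm$ exceeds $\reg J$ the conclusion $\reg I^m > \reg J$ follows directly, giving $e_m - e_{m-1} \leq c-d$ via Corollary~\ref{cor:interp}. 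I would dispatch this case first and then concentrate on the second branch of the minimum, which is the substantive one and governs the refined final sentence.

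For the second branch, the key idea is to work with $I' = (g(I)\setminus J)$, the ideal generated by the minimal generators of $I$ not already in $J$. One has $I = I' + (I\cap J)$, and more usefully $I^m \supseteq$ (sums of products $I'^a (I\cap J)^{b}$ with $a+b=m$); in particular $I^m \supseteq I'^{m-1} \cdot (I\cap J) \supseteq I'^{m-1}J$ once $J \subseteq I$. The heart of the argument is to show that if $w(J) \not\subset s(I'^m) + w(I'^{m-1}J)$, then there is a witness $f$ for $I^m$ with $f \notin J$ and $f \notin w(J)$. Indeed, pick $f \in w(J) \setminus (s(I'^m) + w(I'^{m-1}J))$; the deg of such an $f$ is $\reg J - 1$. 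One argues $f \notin I'^{m-1}J$ (since a fortiori $f\notin w(I'^{m-1}J)$, and if $f$ were in the ideal it would be in its socle image... here I'd use $w(I'^{m-1}J)\subseteq s(I'^{m-1}J)$ and the structure of the socle), hence $f\notin I^m$ as well after checking $(I^m)$ and $(I'^{m-1}J)$ agree on the relevant socle class via Proposition~\ref{prop:regup}; and since $f\notin s(I'^m)$, multiplying by $\mm$ produces an element outside $I'^m$ hence outside $I^m$, witnessing that $\reg I^m > \deg f + 1 = \reg J$. Then Corollary~\ref{cor:interp} yields $e_m - e_{m-1} \leq c-d$.

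It remains to see that the explicit numerical bound $m > (\reg J)/d' + \max\{1-c'/d',0\}$ forces the hypothesis $w(J) \not\subset s(I'^m) + w(I'^{m-1}J)$ to hold automatically---this is really a degree count. The generators of $I'$ have degree at least $d'$, so every element of $I'^m$ has degree at least $md'$, and every element of $I'^{m-1}J$ has degree at least $(m-1)d' + c'$; combining, every element of $s(I'^m) + w(I'^{m-1}J)$ lives in degrees $\geq \min\{md',\ (m-1)d'+c'\}$. When this minimum exceeds $\reg J - 1 = \deg(w(J))$, the set $w(J)$ (which is nonempty and sits in degree $\reg J - 1$) cannot be contained in $s(I'^m)+w(I'^{m-1}J)$, and rearranging $\min\{md', (m-1)d'+c'\} > \reg J - 1$ gives exactly $m > (\reg J)/d' + \max\{1 - c'/d',\ 0\}$ up to the usual integrality slack.

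I expect the main obstacle to be the socle bookkeeping in the middle paragraph: carefully justifying that a witness for $J$ avoiding $s(I'^m)+w(I'^{m-1}J)$ is genuinely \emph{not} in $I^m$ and genuinely \emph{not} in $w(I^m)$ either---i.e.\ that multiplying it by $\mm$ lands outside $I^m$. This is where Proposition~\ref{prop:regup} (equivalently Proposition~\ref{prop:sdisj}) must be invoked to relate the socle of $S/I^m$ to lower powers and to $I'^{m-1}J$, and where the hypothesis that the characteristic does not divide the relevant degree is silently in force. The degree-count reduction and the easy branch are routine by comparison.
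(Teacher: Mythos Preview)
Your degree-count reduction in the final paragraph matches the paper's, and the two-branch structure is right. The substantive middle paragraph, however, has a real gap.

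The central error is the implication ``since $f \notin s(I'^m)$, multiplying by $\mm$ produces an element outside $I'^m$ hence outside $I^m$.'' Since $I'^m \subset I^m$, failing to lie in $I'^m$ is \emph{weaker} than failing to lie in $I^m$; the implication runs the wrong way. Your route to $f \notin I^m$ is similarly muddled: not being in $w(I'^{m-1}J)$ says nothing about membership in $I'^{m-1}J$, and in any case $I'^{m-1}J$ is only one summand of $I^m$. (There is also a slip in the sentence invoking Lemma~\ref{lem:wit}: to conclude $\reg J < \reg I^m$ you need a witness for $J$ outside $I^m \cup w(I^m)$, not a witness for $I^m$ outside $J \cup w(J)$.)

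The paper closes this gap with two ingredients you do not use. First, from $d'm > \reg J$ one shows $I^m \subset J$ outright: any product of $m$ generators of $I$ either involves a generator lying in $J$ or has degree at least $md' > \reg J$ and hence lies in the $\mm$-primary ideal $J$. This already gives $\reg I^m \geq \reg J$, so only equality must be excluded. Second, one expands $I^m = (I'+J)^m = \sum_{i} J^i I'^{m-i}$ and notes that every summand with $i \geq 2$ sits inside $J^2$; since $\reg J^2 > \reg J$ (this is precisely where Proposition~\ref{prop:regup}, via Proposition~\ref{prop:sdisj}, enters), the $J^2$ part cannot interfere in degree $\reg J - 1$. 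What remains is $I'^{m-1}J + I'^m$, and equality $\reg I^m = \reg J$ forces $w(J) \subset s(I'^m) + w(I'^{m-1}J)$, which the degree count rules out. You gesture at Propositions~\ref{prop:regup} and~\ref{prop:sdisj} but never identify that their role is to dispose of the $J^2$ contribution; without that step there is no way to pass from information about $I'^m$ and $I'^{m-1}J$ to information about $I^m$.
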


\begin{proof}

When $m > (\reg J)/d$, manifestly $\reg I^m = dm + e_m > \reg J$.  

Otherwise, assume $d'm> (\reg J)$.  Then $I^m \subset J$, so $\reg I^m \geq \reg J$.  Now $I^m = (J + I')^m = \sum_{i=0}^m J^iI'^{m-i}$.  All but the last two terms are contained in $J^2$, and since $\reg J^2 > \reg J$, $\reg I^m$ can only equal $\reg J$ if $w(J) \subset s(I'^m) + w(I'^{m-1}J)$.  The minimum degree of an element of $s(I'^m)$ is $d'm-1$, while the minimum degree of an element of $s(I'^{m-1}J)$ is $d'(m-1) +c' -1$;  if $d'm > \reg J + d' - c'$ as well, both are larger than $\reg J-1$, the degree of any element of $w(J)$.  

\end{proof}

In the case where $I$ is equigenerated, \cite{EU10} showed that $e_{m+1} \leq e_m$ for all $m > 0$.  We extend this result as follows.  

\begin{cor}
\label{cor:dec}
Let $I \subset S$ be an $\mm$-primary homogeneous ideal of asymptotic generating degree $d$, and let $\reg I^m = dm + e_m$.  Let $d+b$ be the maximal degree of a minimal generator of $I$, let $d+b'$ be the minimal degree strictly greater than $d$ of a minimal generator of $I$ ($b' = \infty$ if no such generator exists), and let $I_{\leq d}$ be the ideal generated by the elements of $I$ of degree at most $d$.  Then 
\begin{enumerate}
\item $e_{m+1} \leq e_m + b$ for all $m > 0$.
\item $e_{m+1} \leq e_m$ for all $m > (\reg I_{\leq d})/(d+b')$.
\end{enumerate}

Note the latter case holds whenever $m \geq n$.  

\end{cor}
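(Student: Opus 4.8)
The plan is to derive Corollary~\ref{cor:dec} directly from Theorem~\ref{thm:bd} and Corollary~\ref{cor:interp} by making judicious choices of the auxiliary ideal $J$. For part (1), I would take $J = I$, so that $c = d+b$ and $J \subset I$ trivially. Then Corollary~\ref{cor:interp} would give $e_m - e_{m-1} \leq c - d = b$ as soon as $\reg I^m > \reg J = \reg I$; but in fact $w(I^m) \subset I$ fails only for $m=1$ (since $I^m \subset I$, and $w(I^m)$ consists of socle elements of $S/I^m$, which by Proposition~\ref{prop:sdisj} are not in $s(I^1)$ and in particular... hmm). More carefully: for $m \geq 1$ we have $\reg I^m \geq \reg I^1 = \reg I$, with equality only at $m=1$ by the Corollary just before Theorem~\ref{thm:bd} (strict increase of $\reg I^m$). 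Hence for $m \geq 2$, $\reg I^m > \reg I = \reg J$, so Corollary~\ref{cor:interp} applies and $e_m - e_{m-1} \leq b$, i.e. $e_{m+1} \leq e_m + b$ for all $m \geq 1$. I should double-check the $m=1$ edge case separately, but $e_2 \leq e_1 + b$ should follow since $\reg I^2 > \reg I$ already forces $w(I^2) \subset I$.

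For part (2), I would take $J = I_{\leq d}$. Then the minimal generators of $J$ all have degree at most $d$, so $c \leq d$ and $c - d \leq 0$, giving the desired conclusion $e_{m+1} \leq e_m$ once $\reg I^{m+1} > \reg J = \reg I_{\leq d}$. To get this inequality at the claimed threshold, I apply Theorem~\ref{thm:bd} with this $J$: here $d' = d + b'$ (the minimal degree of a minimal generator of $I$ not lying in $J = I_{\leq d}$ is exactly $d+b'$ by definition of $b'$), and $c' \geq 1$. The theorem guarantees $\reg I^m > \reg J$ whenever $m > \min\{(\reg J)/d, (\reg J)/d' + \max\{1 - c'/d', 0\}\}$; in particular whenever $m > (\reg I_{\leq d})/(d+b')$, since the minimum is at most the second argument's first term $(\reg J)/d'$. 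So for $m > (\reg I_{\leq d})/(d+b')$ we get $\reg I^m > \reg I_{\leq d}$ and hence $e_m \leq e_{m-1}$; reindexing gives (2). (If $b' = \infty$, then $I = I_{\leq d}$, $\reg I_{\leq d}/(d+b') = 0$, and the conclusion holds for all $m > 0$, matching the equigenerated case.)

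Finally, for the closing remark that (2) holds whenever $m \geq n$, I would show $(\reg I_{\leq d})/(d+b') < n$, i.e. $\reg I_{\leq d} < n(d+b')$. Since $I_{\leq d}$ is $\mm$-primary and generated in degrees $\leq d$, its regularity is controlled: $S/I_{\leq d}$ is artinian, and one can bound $\reg I_{\leq d} \leq n(d-1) + 1 \leq nd$ — for instance because $I_{\leq d}$ contains an $\mm$-primary ideal generated by a regular sequence of $n$ forms of degree $\leq d$ (extractable since $I_{\leq d}$ is $\mm$-primary and $k$ may be assumed infinite, or by a general position / prime avoidance argument), whose regularity is $n(d-1)+1$, and regularity can only drop under passing to a larger $\mm$-primary ideal in a complete intersection... actually the cleanest route is: $\mm^{n(d-1)+1} \subseteq (\text{that regular sequence}) \subseteq I_{\leq d}$, so $\reg I_{\leq d} \leq n(d-1)+1 < n \cdot d \leq n(d+b')$ when $b' \geq 0$, hence $(\reg I_{\leq d})/(d+b') < n$ and $m \geq n$ suffices. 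The main obstacle I anticipate is pinning down this last bound $\reg I_{\leq d} \leq n(d-1)+1$ cleanly — I need to be sure that $I_{\leq d}$ really does contain a regular sequence of $n$ forms of degree exactly $d$ (or at most $d$), which is where $\mm$-primarity of $I$ and the structure of $I_{\leq d}$ (it has the same radical, namely $\mm$) enter; everything else is bookkeeping around Theorem~\ref{thm:bd}.
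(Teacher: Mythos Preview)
Your approach is the paper's: take $J=I$ for (1) and $J=I_{\leq d}$ for (2), invoke Corollary~\ref{cor:interp}/Theorem~\ref{thm:bd}, and bound $\reg I_{\leq d}\leq n(d-1)+1$ for the closing remark. (The paper's printed proof in fact has the labels ``first'' and ``second'' interchanged, but its content matches yours.)

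One step needs correction. In part (2) you assert that the minimum in Theorem~\ref{thm:bd} is at most $(\reg J)/d'$; this is false, since both $(\reg J)/d$ and $(\reg J)/d'+\max\{1-c'/d',0\}$ are at least $(\reg J)/d'$ (note $d<d'$ and the max-term is nonnegative). What \emph{is} true is that the second term is strictly less than $(\reg J)/d'+1$ (because $c'\geq 1$), so Theorem~\ref{thm:bd} yields $e_m-e_{m-1}\leq c-d\leq 0$ once $m>(\reg J)/d'+1-c'/d'$. Shifting $m\mapsto m+1$ then gives $e_{m+1}\leq e_m$ for all $m>(\reg I_{\leq d})/(d+b')-c'/(d+b')$, and in particular for $m>(\reg I_{\leq d})/(d+b')$, which is precisely statement (2). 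So your reindexing does rescue the argument, but for a slightly different reason than the one you stated.
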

\begin{proof}
Let $u$ be a minimal generator of $I$ of degree $d+b$.  
Let $J = I_{\leq d}$. The minimal degree of an element of $I \backslash J$ is $d+b'$, which yields the first statement by Theorem~\ref{thm:bd}.

Setting $J=I$ yields the second statement.

The largest possible value of $\reg I_{\leq d}$ is $n(d-1) + 1$, attained only when $I_{\leq d}$ is generated by a regular sequence of degree $d$.  Combining this with (2) makes the final observation manifest.

\end{proof}

\section{Stabilization in the monomial case}
\label{sec:mon}

We continue with analysis of the monomial case, in which we can give a bound on the stabilization of the defect sequence that is in some cases sharp.  The basic idea will be, firstly, to show that for sufficiently large $m$, one can always find a monomial witness $a \in w(I^m)$ that has a large order with respect to some one of the variables; and secondly, to show that for sufficiently large $m$, the monomials in $I^{m+1}$ of large order with respect to any one of the variables are simply a shift of those in $I^m$.  We will do this by examining monomial exponent vectors modulo the lattice generated by the exponents of the pure powers appearing as minimal generators of $I$.  

Any $\mm$-primary monomial ideal $I$ of $S$ contains pure powers of each variable among its minimal generators, the largest of which is $d$, the asymptotic generating degree of $I$.  Therefore, renaming the $x_i$ appropriately, such an ideal may be written as $$I = (x^d, y_1^d, \ldots, y_{l-1}^d, z_1^{d_1}, z_2^{d_2}, \ldots, z_k^{d_k}, h_1, \ldots, h_r)$$ where $d > d_1 \geq d_2 \geq \cdots \geq d_k$, $l+k = n$, and $h_1, \ldots, h_r$ are the non-pure minimal generators of $I$.  

\begin{thm}
\label{thm:simplebd}
Let $I$ be an $\mm$-primary monomial ideal of $S$, with asymptotic generating degree $d$, and number of pure generators of degree $d$ equal to $l$ as above.  Then $e_{m+1} = e_m$ for all $$m > \max\{n-1, (n-1)[l(d-1) - 1]\}.$$  
\end{thm}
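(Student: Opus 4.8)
The strategy follows the two-part outline announced at the start of the section: produce a monomial witness of large order in one variable, then show large-order monomials in $I^{m+1}$ are a pure shift of those in $I^m$. Write $I = (x^d, y_1^d, \ldots, y_{l-1}^d, z_1^{d_1}, \ldots, z_k^{d_k}, h_1, \ldots, h_r)$ as in the setup, and let $L \subset \ZZ^n$ be the lattice generated by the exponent vectors of the pure powers $x^d, y_i^d, z_j^{d_j}$; note $\ZZ^n/L$ is finite of order $d^l \prod_j d_j$. The key invariant is the class of a monomial's exponent vector in $\ZZ^n/L$: if $\xx^{\aa} \in I^m$ and $\aa \equiv \aa' \pmod{L}$ with $\aa' \geq \aa$ coordinatewise, then multiplying by the appropriate product of pure powers shows $\xx^{\aa'} \in I^{m+\text{(number of pure powers used)}}$, and conversely one can sometimes divide.

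First I would establish the witness-production step: for $m$ large, there is $a \in w(I^m)$ whose $x$-order (say) is at least $d$, in fact as large as one likes relative to fixed constants. The point is that $\reg I^m = dm + e_m$ grows linearly, so a witness monomial $a$ has total degree $dm + e_m - 1 \to \infty$; since $a \notin I^m$ but $\mm a \subset I^m$, the exponent vector of $a$ lies in a bounded region modulo $L$ in a suitable sense (this is where one uses that $S/I^m$ is artinian with controlled socle). By pigeonhole on the coordinates, for $m > (n-1)[l(d-1)-1]$ some coordinate — after renaming, the $x$-coordinate — must be at least $d$; the precise threshold $(n-1)[l(d-1)-1]$ comes from bounding the socle degrees and the number of variables.

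Second, the shift step: I claim that for such large $m$, a monomial $\xx^{\aa}$ with $a_x \geq d$ lies in $I^{m+1}$ if and only if $\xx^{\aa - d e_x} \in I^m$ (where $e_x$ is the standard basis vector for the $x$-coordinate), or something close — the forward direction is the content, using Proposition~\ref{prop:regup}-style socle arguments together with the lattice structure: a high-$x$-order monomial in a socle of $S/I^{m+1}$ must, after dividing by $x^d$, land in the socle of $S/I^m$, because the only generator involving a high power of $x$ is $x^d$ itself. This gives $w(I^{m+1}) \cap \{a_x \geq d\} = x^d \cdot (w(I^m) \cap \{a_x \geq 0\})$ up to degree bookkeeping, hence $\reg I^{m+1} = \reg I^m + d$, i.e. $e_{m+1} = e_m$. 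The condition $m > n-1$ ensures we are past the range where $I^m$ fails to behave like a product of $I$ with $I^{m-1}$ in the relevant coordinate.

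The main obstacle I anticipate is the first step — guaranteeing that the witness can be chosen with large order in a \emph{single} variable (not just large total degree spread over several). This requires carefully analyzing the monomials in the socle of $S/I^m$ via the lattice $\ZZ^n/L$: one must argue that among the finitely many residue classes, those that can support a socle element of maximal total degree must have a representative that is forced high in some coordinate once $m$ exceeds the stated bound, and that this coordinate can be taken to be one where the pure power has degree exactly $d$ (the $l$ in the bound). Making the counting precise — tracking how the socle of $S/I^m$ relates to the "corner" monomials and bounding the number of lattice classes that obstruct the shift — is the technical heart, and is presumably where the sharper result alluded to after the theorem gets its extra apparatus.
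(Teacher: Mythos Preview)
Your broad strategy---reduce modulo the lattice of pure-power exponents and exploit a shift-by-$x^d$ relationship between witnesses of consecutive powers---matches the paper's, but the execution has a real gap in the direction $e_{m+1}\le e_m$, and the paper in fact handles the two inequalities by entirely separate mechanisms.

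The shift step as you state it does not give a bijection. The direction you call ``the content''---that a socle element $a'\in s(I^{m+1})$ with $\ord_x(a')\ge d$ yields $a'/x^d\in s(I^m)$---fails: one gets $a'/x^d\notin I^m$ easily, but $\mm\,(a'/x^d)\subset I^m$ would require $I^{m+1}:x^d=I^m$, which is false in general (e.g.\ for $I=(x^3,y^3,xy)$ one has $x^3y\in s(I^2)$ while $y\notin s(I)$, since $y^2\notin I$). Even if you restrict to $w(I^{m+1})$, nothing in your Step~1 forces a \emph{maximal}-degree witness of $I^{m+1}$ to have large $x$-order; your pigeonhole was aimed at $w(I^m)$. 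So your argument only supports the inequality $e_{m+1}\ge e_m$: take $a\in w(I^m)$ with large $x$-order and show $x^d a\notin I^{m+1}$. That is precisely Lemma~\ref{lem:abar2}, whose proof is the observation that any monomial in $I^{s+1}\setminus x^dI^s$ has $x$-order at most $(s+1)(d-1)$.

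The paper gets $e_{m+1}\le e_m$ not from the monomial shift at all, but from Corollary~\ref{cor:dec} in Section~\ref{sec:diff}: for any $\mm$-primary $I$, once $m\ge n$ one has $\reg I^m>\reg I_{\le d}$, so $w(I^m)\subset I_{\le d}$, and then Theorem~\ref{thm:1diff} with $J=I_{\le d}$ gives $e_m-e_{m-1}\le 0$. This accounts for the ``$n-1$'' in the $\max$. Your invocation of ``Proposition~\ref{prop:regup}-style socle arguments'' points at the wrong tool; that proposition is used for strict increase of regularity in Section~\ref{sec:diff}, not for the monomial analysis.

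Finally, the witness-production step is not a direct pigeonhole on the coordinates of $a$. The paper argues by contrapositive (proof of Theorem~\ref{thm:inc}): if $e_{m+1}<e_m$, then Lemma~\ref{lem:abar2} must fail for \emph{every} degree-$d$ variable $y_1,\ldots,y_l$ (including $x$), giving $l$ upper bounds $\ord_{y_i}(\bar a)\le \bar m d-(m_{y_i}+\bar m+1)$; summing these and combining with the a~priori bounds $1\le\bar m\le n-1$ and $\ord_Z(a)\le\deg(\bar a)-\bar m d+1$ of Proposition~\ref{prop:mbarbd} forces $m\le(\bar m-1)(l-1)(d-1)+(n-1)(d-2)$. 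This is where the precise threshold $(n-1)[l(d-1)-1]$ and the role of $l$ (not $n$) come from; a naive pigeonhole on total degree would not distinguish the $l$ degree-$d$ variables from the $z_j$.
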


To prove this, we require some additional notation, which will also allow a stronger statement.  

Given a homogeneous ideal $J$ and a form $f \in S$, let $\ord_J(f)$ denote the $J$-order of $f$, i.e. the unique natural number $t$ such that $f \in J^t \backslash J^{t+1}$.   If $I$ is as above, let $Y = (y_1^d, \ldots, y_{l-1}^d)$, $y = (y_1, \ldots, y_{l-1})$, $Z = (z_1^{d_1}, \ldots, z_k^{d_k})$, and $z = (z_1, \ldots, z_n)$.  If $a$ is a monomial of $S$, we factor $a$ as $$a = x^{m_xd + o_x}y_1^{m_{y_1}d+o_{y_1}}\cdots y_l^{m_{y_l}d+o_{y_{l-1}}}z_1^{m_{z_1}d_1+ o_{z_1}}\cdots z_k^{m_{z_k}d_k+o_{z_k}}$$ where the $m_{x_i}$ are as large as possible, so for instance $\ord_Y(a) = \sum_i m_{y_i}$.  We define the \emph{reduction of $a$} to be $\bar{a} = x^{o_x}y_1^{o_{y_1}}\cdots z_k^{o_{z_k}}$, and $a_x = x^{m_xd}\bar{a}$.  Given an natural number $m$, we write $\bar{m} = \bar{m}(m,a) = m - m_x - \ord_Y(a) - \ord_Z(a)$.  Finally, let $\mu_m = \min_{a \in w(I^m)}\bar{m}(a)$.

The more refined theorem we will prove is the following.  

\begin{thm}
\label{thm:inc}
Let $I$ be an $\mm$-primary monomial ideal of $S$, with asymptotic generating degree $d$, and number of pure generators of degree $d$ equal to $l$.  Then $e_{m+1} \geq e_m$ if $m > (n-1)(d-2) + (\mu_m - 1)(l-1)(d-1)$.  
\end{thm}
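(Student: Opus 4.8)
The plan is to show that if $m$ is large enough then for any monomial witness $a \in w(I^m)$ realizing $\mu_m = \bar m(a)$, a suitable shift of $a$ (multiplying by one of the pure-power variables to the power $d$) lands in $w(I^{m+1})$; this forces $\reg I^{m+1} \ge \deg(a) + d + 1 = \reg I^m + d$, hence $e_{m+1} \ge e_m$. The key reduction is combinatorial: a monomial $b$ lies in $I^t$ if and only if its exponent vector admits a decomposition as a sum of $t$ exponent vectors of generators of $I$ (with a nonnegative remainder), and passing to the reduction $\bar a$ modulo the lattice generated by the pure-power exponents $\{d\,\mathbf{e}_x, d\,\mathbf{e}_{y_1},\ldots, d\,\mathbf{e}_{y_{l-1}}, d_1 \mathbf{e}_{z_1},\ldots,d_k\mathbf{e}_{z_k}\}$ separates the "pure part" (contributing $m_x + \ord_Y(a) + \ord_Z(a)$ to the order) from the "residual part" $\bar a$, which can only be covered by the non-pure generators $h_i$ or by further pure powers, and which accounts for the remaining $\bar m(a)$ factors.

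First I would fix $a \in w(I^m)$ with $\bar m(a) = \mu_m$ and record the structural facts: $\deg(a) = \reg I^m - 1 = dm + e_m - 1$, $a \notin I^m$ but $\mm a \subseteq I^m$, and the factorization $a = x^{m_x d + o_x} y_1^{m_{y_1}d + o_{y_1}} \cdots z_k^{m_{z_k}d_k + o_{z_k}}$ with the $m_{x_i}$ maximal. The first main step is a lemma: for $m$ in the stated range, at least one of the exponents $m_x, m_{y_1}, \ldots, m_{y_{l-1}}$ is positive — equivalently, $x$ or some $y_j$ divides $a$ to a power at least $d$. This is where the hypothesis $m > (n-1)(d-2) + (\mu_m - 1)(l-1)(d-1)$ enters: one bounds $\deg(\bar a) \le (n-1)(d-1)$ crudely (each of the $n$ residual exponents is $< d$, and one can do slightly better using that $\bar a$ is supported away from being a full witness), bounds the contribution of the $z_i$-powers, and counts how many of the $m$ "slots" can be consumed by the residual part $\bar a$ together with the $Z$-order — at most something like $(\mu_m - 1)(l-1)(d-1) + \ord_Z(a)$ — so that if $m$ exceeds the stated quantity there must be a surviving pure power among $x, y_1,\ldots,y_{l-1}$, i.e.\ among the generators of degree exactly $d$.

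The second main step: having found, say, $x^d \mid a$ (the case $y_j^d \mid a$ is symmetric), I would show $x^d a \in w(I^{m+1})$. That $x^d a \in I^{m+1}$ is because $a \in I^m$ would contradict $a \notin I^m$... rather, one argues directly: $\mm a \subseteq I^m$ together with $x^d a = x^{d-1}\cdot(xa)$ and $xa \in I^m$ gives $x^d a \in \mm^{d-1} I^m$; more to the point, since every minimal generator $\mm a / \!\!\sim$... I would instead verify the socle condition: $\mm (x^d a) = x^d (\mm a) \subseteq x^d I^m \subseteq I^{m+1}$ since $x^d \in I$, and $x^d a \notin I^{m+1}$ because otherwise, using the lattice/decomposition description of membership in powers of a monomial ideal, one could cancel a single $x^d$ and deduce $a \in I^m$, a contradiction. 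Hence $x^d a \in s(I^{m+1})$, so $\deg(x^d a) = \deg(a) + d \le \reg I^{m+1} - 1$, giving $dm + e_m - 1 + d \le d(m+1) + e_{m+1} - 1$, i.e.\ $e_{m+1} \ge e_m$.

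The main obstacle I anticipate is the pigeonhole count in the first step: making precise the claim "$x^d a \notin I^{m+1} \Rightarrow a \notin I^m$ after cancellation" requires the clean statement that membership of a monomial in $I^t$ is governed by whether its exponent, minus the lattice span of the pure powers and minus the residual, can be written as a nonnegative combination of exactly the right number of generator exponents — and that the pure powers behave like free generators of a lattice so cancellation is legitimate. Equally delicate is extracting the sharp constant $(n-1)(d-2) + (\mu_m-1)(l-1)(d-1)$ rather than a weaker bound: one must carefully allocate the residual degree $\deg(\bar a)$ across the $n-1$ "non-surviving" coordinate directions and correctly charge each unit of $\bar m(a) - 1$ to at most $(l-1)(d-1)$ slots. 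I would isolate both of these as separate lemmas before assembling the theorem, and I expect Theorem~\ref{thm:simplebd} to follow by bounding $\mu_m \le l(d-1)$ (the reduction $\bar a$ has $x$-exponent $< d$ and each $y_j$-exponent $< d$, giving at most $l(d-1)$ residual slots in the degree-$d$ directions), plus the trivial bound handling small $m$, together with the decreasing direction already supplied by Corollary~\ref{cor:dec}.
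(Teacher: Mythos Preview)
Your second step rests on a cancellation claim that is simply false: it is \emph{not} true that $x^d\mid a$ and $a\notin I^m$ force $x^d a\notin I^{m+1}$.  Take $I=(x^4,y^4,x^3y)$, $m=2$, $a=x^5y^3$.  Then $x^4\mid a$, and $a\notin I^2$ (no product of two generators divides it), yet $x^4a=x^9y^3=(x^3y)^3\in I^3$.  The non-pure generators can collectively supply the extra $x^d$ without any single factor being $x^d$, so the ``lattice cancellation'' you describe is illegitimate; the pure powers do generate a free sublattice, but membership in $I^{m+1}$ is not governed by that lattice alone.  The paper's Lemma~\ref{lem:abar2} replaces your cancellation by the correct statement: if $b\notin I^s$ and $\ord_x(b)>s(d-1)-1$, then $x^d b\notin I^{s+1}$ (and inductively $x^{dq}b\notin I^{s+q}$).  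The extra hypothesis on $\ord_x(b)$ is essential --- it is exactly what rules out a factorization by $x$-heavy non-pure generators --- and it is a condition on the residual exponent $\ord_x(\bar a)$, not on $m_x$.

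Consequently your first step is also aimed at the wrong target.  You try to show that some $m_{y_i}\ge 1$; the paper instead argues by contraposition that if $e_{m+1}<e_m$ then Lemma~\ref{lem:abar2} must fail for \emph{every} degree-$d$ variable $y_i$ (including $x$, relabelled as $y_l$), giving $\ord_{y_i}(\bar a)\le \bar m d-(m_{y_i}+\bar m+1)$ for all $i$.  Summing these $l$ inequalities, substituting $\ord_Y(a)+\ord_Z(a)+\bar m=m$ and the bound $\ord_Z(a)\le\deg(\bar a)-\bar m d+1$ from Proposition~\ref{prop:mbarbd}(3), and then using $\deg(\bar a)-\ord_y(\bar a)=\ord_z(\bar a)\le\sum(d_i-1)\le k(d-1)$, one arrives at $m\le(\bar m-1)(l-1)(d-1)+(n-1)(d-2)$.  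The sharp constant comes precisely from summing the residual-exponent bounds across all $l$ variables of degree $d$, which your $m_{y_i}\ge 1$ approach cannot reproduce.  (A minor side point: your proposed bound $\mu_m\le l(d-1)$ for deducing Theorem~\ref{thm:simplebd} is also off; the paper uses $\mu_m\le n-1$ from Proposition~\ref{prop:mbarbd}(1).)
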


Given that $\mu_m \leq n-1$ by Proposition~\ref{prop:mbarbd}, this combines with Corollary~\ref{cor:dec} to give Theorem~\ref{thm:simplebd}.  The proof will require a few technical results.  

\begin{prop}
\label{prop:mbarbd}
If $a$ is a monomial in $w(I^m)$, then 
\begin{enumerate} 
\item{$1\leq \bar{m} \leq n-1$}
\item{$\bar{m}d -1 \leq \deg(\bar{a}) < l(d-1) + \sum_{i=1}^k (d_i-1) \leq n(d-1) - k$}
\item{$\ord_Z(a) \leq \deg(\bar{a}) - \bar{m}d + 1 \leq (n-1)(d-1)-1$}
\end{enumerate}
\end{prop}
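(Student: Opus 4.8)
The plan is to translate everything into statements about the order function $\ord_I$ and the order function of the pure‑power ideal. Write $\delta_1=\cdots=\delta_l=d$ and $\delta_{l+1}=d_1,\ldots,\delta_n=d_k$ for the degrees of the pure‑power minimal generators of $I$, and set $P=(x^d,y_1^d,\ldots,y_{l-1}^d,z_1^{d_1},\ldots,z_k^{d_k})$. Since $P$ is a regular sequence of pure powers, a monomial $c$ with exponent vector $(c_1,\ldots,c_n)$ has $\ord_P(c)=\sum_t\lfloor c_t/\delta_t\rfloor=m_x+\ord_Y(c)+\ord_Z(c)$, so $\bar m=m-\ord_P(a)$. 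The preliminary observation I would make is that $a\in w(I^m)$ forces $\ord_I(a)=m-1$: one has $w(I^m)\subseteq s(I^m)$, and $s(I^m)\subseteq I^{m-1}$ by Proposition~\ref{prop:sdisj}, while $a\notin I^m$, so $a\in I^{m-1}\setminus I^m$. Hence the identity
$$\bar m=\ord_I(a)-\ord_P(a)+1 ,$$
and since $P\subseteq I$ gives $\ord_P(a)\le\ord_I(a)$, we get $\bar m\ge 1$ at once --- equivalently, if $\bar m\le 0$ then $a$ is divisible by a product of at least $m$ pure‑power generators and so lies in $I^m$. I would also record the elementary estimate $\ord_I(x_tc)\le\ord_I(c)+1$ (remove from an optimal factorization of $x_tc$ a factor containing $x_t$), which combined with the socle condition yields $\ord_I(a+e_t)=m$ for every variable $x_t$.

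For the degree bounds (2) and (3) I would proceed by elementary estimates. The upper bound on $\deg(\bar a)$ is obtained by summing the coordinatewise constraints $o_x\le d-1$, $o_{y_i}\le d-1$, $o_{z_j}\le d_j-1$ (and $d_j-1\le d-2$), with the extremal case $\bar a=x^{d-1}y_1^{d-1}\cdots z_k^{d_k-1}$ ruled out by a short argument from the socle condition to force strict inequality. For the lower bounds, the point is that $x^{dm}$ is always a minimal generator of $I^m$: if $x^{dm}=x_iu$ with $u$ a monomial then necessarily $x_i=x$ and $u=x^{dm-1}$, which is divisible by no generator of $I$ except $x^d$ and by that only $m-1$ times, so $u\notin I^m$. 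Hence $\reg I^m\ge dm$ and $\deg a=\reg(S/I^m)\ge dm-1$. Writing $\deg a=\deg(\bar a)+\deg(\text{pure part of }a)$ and bounding the pure part by $(m-\bar m)d-\ord_Z(a)$ --- each of the $\ord_Z(a)$ pure powers among the $z_j$ contributes degree $d_j\le d-1$, one short of $d$ --- gives $\deg(\bar a)\ge\bar m d-1+\ord_Z(a)$, which is the first inequality of (3); discarding $\ord_Z(a)$ yields the lower bound of (2), and combining the strict upper bound on $\deg(\bar a)$ with $\bar m\ge1$ yields the last inequality of (3).

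The crux is the upper bound $\bar m\le n-1$ in (1), which by the identity above amounts to $\ord_I(a)-\ord_P(a)\le n-2$. Here I would fix an $I$‑factorization $a=g_1\cdots g_{m-1}w$ with $m-1=\ord_I(a)$ factors, chosen to minimize the number $p$ of non‑pure factors; the other $m-1-p$ factors are pure powers, so $\ord_P(a)\ge m-1-p$ and thus $\bar m\le p+1$. The claim then reduces to showing that $a$ has some length‑$(m-1)$ factorization using at most $n-2$ non‑pure factors. This is where I expect the genuine work, and where the analysis of exponent vectors modulo the lattice $L=\bigoplus_t\delta_t\ZZ$ generated by the pure‑power exponents --- announced in the introduction --- should enter: one wants to show that any factorization with $n-1$ or more non‑pure factors can be rearranged into one with strictly fewer, the only obstruction to which would be that increasing some coordinate of $a$ does not raise $\ord_I$, and that the socle condition $\ord_I(a+e_t)=m$ excludes. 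Converting "$n-1$ independent non‑pure generators cannot be forced on a witness" into a rigorous replacement argument is, I believe, the heart of the proposition, and the step most likely to require careful combinatorial bookkeeping.
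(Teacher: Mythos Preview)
You have misidentified where the work lies. The upper bound $\bar m\le n-1$, which you call ``the crux'' and for which you sketch an admittedly incomplete factorization--replacement argument, is in fact an immediate consequence of the degree estimates in (2) that you have already obtained: from $\bar m d-1\le\deg(\bar a)\le n(d-1)$ one gets $\bar m\le n-(n-1)/d<n$ (for $n\ge 2$), hence $\bar m\le n-1$. This is exactly how the paper proves it --- one line, no combinatorics of non-pure factorizations, no lattice replacement, no appeal to the socle condition. The exponent-vector analysis modulo the pure-power lattice that the section's introduction announces is deployed in the later Lemmas~\ref{lem:common} and~\ref{lem:abar2}, not in this proposition, which is entirely elementary.

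The remainder of your argument is fine and essentially matches the paper: the coordinatewise upper bound on $\deg(\bar a)$, the lower bound via $\deg a\ge dm-1$ together with $\deg(\text{pure part})\le(m-\bar m)d-\ord_Z(a)$, and your ``equivalently'' clause for $\bar m\ge 1$ (if $\bar m\le 0$ then $a$ is divisible by $m$ pure-power generators, hence lies in $I^m$) are all exactly the paper's reasoning. Two minor remarks: the detour through $\ord_I(a)=m-1$ via Proposition~\ref{prop:sdisj} is unnecessary and drags in a characteristic-zero hypothesis that the monomial section does not otherwise carry; and the strict inequality $\deg(\bar a)<l(d-1)+\sum_i(d_i-1)$ you propose to secure by a socle argument can in fact fail (take $I=(x^d,y^d)$ and $a=x^{d-1}y^{d-1}\in w(I)$), but the non-strict version already suffices for everything, including $\bar m\le n-1$.
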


\begin{proof}
The monomial exponent vector of $\bar{a}$ is the remainder of that of $a$ when reduced by the lattice of exponents of pure generators of $I$, so in particular $a \notin I^m$ implies that $\bar{a} \notin I^{\bar{m}}$.  This shows that $\bar{m} \geq 1$.  Also, each exponent of a variable in $\bar{a}$ is less than the corresponding pure power of that variable appearing as a minimal generator of $I$, giving the upper bound in (2).  The upper bound in (3) follows.  

As $e_m \geq 0$, the hypothesis of the proposition ensures that $$md -1 \leq \deg(a) = \deg(\bar{a}) + m_xd + \sum_{i=1}^{l-1}{m_{y_i}d} + \sum_{i=1}^k{m_{z_i}d_i}$$  Subtracting $(m-\bar{m})d$ from both sides yields $$\bar{m}d-1 \leq \deg(\bar{a}) - \sum_{i=1}^k{m_{z_i}(d-d_i)}$$
and as all $d_i < d$ this gives the lower bound in (2).  Combined with the observation that $\sum{m_{z_i}} = \ord_Z(a)$, this also gives the lower bound in (3).  Finally, dividing (2) by $d$ gives $\bar{m} < n\left(\frac{d-1}{d}\right) - \left(\frac{k+1}{d}\right) < n$.    
\end{proof}

Next, we will impose constraints on $\bar{a}$ that guarantee stabilization of the defect.  The first lemma in this direction shows that the defect is nondecreasing if appropriate shifts of a witness fail to lie in corresponding powers of $I$.  

\begin{lem}
\label{lem:common}
Suppose $a\in w(I^m)$.  
If for all $q\geq 0$ we have $x^{dq}a_x \notin I^{m_x+\bar{m} + q}$, then $\ord_Z(a)=0$, and $e_{m'} \geq e_{m}$ for all $m' \geq m$.
\end{lem}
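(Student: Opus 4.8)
The plan is to avoid manipulating the monomial generators of $I$ altogether and to argue purely with degrees, using the fact that for an $\mm$-primary (hence artinian) ideal $J$ one has $\reg(S/J)=\max\{j:(S/J)_j\neq 0\}$; equivalently, any monomial of degree exceeding $\reg(S/J)=\reg J-1$ already lies in $J$. The organizing quantity is $E:=\deg(\bar a)-d\bar m+1$.

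First I would compute $\deg(a)$ in two ways. Since $a\in w(I^m)$, $\deg(a)=\reg(S/I^m)=dm+e_m-1$. Reading the factorization of $a$ off directly gives $\deg(a)=m_xd+\ord_Y(a)\,d+\sum_j m_{z_j}d_j+\deg(\bar a)$. Equating these, and substituting $dm=d(m_x+\ord_Y(a)+\ord_Z(a)+\bar m)$ together with $\ord_Z(a)=\sum_j m_{z_j}$, one arrives at
\[
e_m=E-\sum_j m_{z_j}(d-d_j).
\]
Since $d>d_j$ for every $j$, this already shows $e_m\le E$, with equality precisely when $\ord_Z(a)=0$. So it remains to prove $e_m\ge E$ (which will force $\ord_Z(a)=0$) and $e_{m'}\ge E$ for every $m'\ge m$ (which will then give $e_{m'}\ge e_m$).

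Both are obtained from the hypothesis by one observation. The monomial $x^{dq}a_x=x^{(m_x+q)d}\bar a$ has degree $(m_x+q)d+\deg(\bar a)$; since by hypothesis it is not in $I^{m_x+\bar m+q}$, its degree cannot exceed $\reg(S/I^{m_x+\bar m+q})=d(m_x+\bar m+q)+e_{m_x+\bar m+q}-1$, which rearranges to $e_{m_x+\bar m+q}\ge E$ for every $q\ge 0$. Choosing $q=\ord_Y(a)+\ord_Z(a)$ makes $m_x+\bar m+q=m$, so $e_m\ge E$; combined with the previous paragraph, $e_m=E$ and $\ord_Z(a)=0$. Then for a given $m'\ge m$, choosing $q=m'-m_x-\bar m$ — which equals $m'-m+\ord_Y(a)\ge 0$ once $\ord_Z(a)=0$ — makes $m_x+\bar m+q=m'$ and yields $e_{m'}\ge E=e_m$.

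The one point deserving care is the regularity fact $\reg(S/J)=\max\{j:(S/J)_j\neq 0\}$ for $\mm$-primary $J$, which holds because $S/J$ is artinian, so $H^i_{\mm}(S/J)=0$ for $i>0$ while $H^0_{\mm}(S/J)=S/J$; this is exactly what converts ``$x^{dq}a_x\notin I^{m_x+\bar m+q}$'' into the needed degree bound. Apart from that I expect no genuine obstacle — the remaining work is bookkeeping: tracking the factorization of $a$, confirming the two chosen values of $q$ are nonnegative, and checking that $E$ has been set up so that the estimates $e_m\le E$ and $e_{m'}\ge E$ pinch $e_m$ from both sides.
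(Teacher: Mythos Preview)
Your proof is correct and follows essentially the same approach as the paper's: both use the hypothesis to produce, for each $q\geq 0$, an element of degree $(m_x+q)d+\deg(\bar a)$ not in $I^{m_x+\bar m+q}$, then specialize $q$ so that the power is $I^m$ (forcing $\ord_Z(a)=0$) and thereafter $I^{m'}$. Your explicit introduction of $E=\deg(\bar a)-d\bar m+1$ makes the two-sided bound $e_m\le E\le e_m$ more transparent than the paper's direct degree comparison, but the underlying argument is the same.
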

\begin{proof}
Set $q = \ord_Y(a) + \ord_Z(a) +q'$; then the hypothesis ensures $x^{dq}a_x \notin I^{m+q'}$, and $$\deg(x^{dq}a_x) = \deg(a) + dq' + \sum_{i=1}^k{m_{z_i}(d-d_i)}$$  Setting $q' = 0$ gives a monomial $x^{dq}a_x \notin I^m$, whose degree can be no larger than that of $a \in w(I^m)$.  As all $d_i <d$, this shows that all $m_{z_i}=0$, so their sum $\ord_Z(a) = 0$ also.  We now have a monomial not in $I^{m+q'}$ of degree $\deg(a) + dq'$ for all $q'\geq 0$, showing that $e_{m+q'} \geq  e_m$ for all $q'\geq 0$.  
\end{proof}

The next lemma shows that if the reduction of a witness has sufficiently large order with respect to $x$, then we can find a witness with the same reduction and order $0$ with respect to $Y+Z$, shifts of which will remain outside higher powers of $I$.  

\begin{lem}
\label{lem:abar2}
If $a \in w(I^m)$ and $\ord_x(\bar{a}) > \bar{m}d - (m_x+\bar{m}+1)$, then $\ord_Z(a) = 0$, and $e_{m'} \geq e_m$ for all $m' \geq m$.
\end{lem}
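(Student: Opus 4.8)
The plan is to reduce to Lemma~\ref{lem:common} by showing that the hypothesis $\ord_x(\bar a) > \bar md - (m_x + \bar m + 1)$ forces $x^{dq}a_x \notin I^{m_x + \bar m + q}$ for every $q \geq 0$; once that is established, Lemma~\ref{lem:common} yields both $\ord_Z(a)=0$ and the inequality $e_{m'} \geq e_m$ for all $m' \geq m$ immediately. So the real content is the claimed non-membership statement. First I would unwind the definitions: $a_x = x^{m_x d}\bar a$, so $x^{dq}a_x = x^{(m_x+q)d}\bar a$, a monomial whose total $x$-exponent is $(m_x+q)d + \ord_x(\bar a)$ and whose remaining variables carry exactly the exponents of $\bar a$, all of which are strictly below the corresponding pure powers $d, d$ (for the $y_i$), and $d_i$ (for the $z_i$) appearing as minimal generators of $I$.

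Next I would argue by contradiction: suppose $x^{dq}a_x \in I^{m_x+\bar m+q}$ for some $q \geq 0$. I want to extract from this membership a membership of $\bar a$ in a power of $I$ that contradicts $\bar a \notin I^{\bar m}$ (which holds by the proof of Proposition~\ref{prop:mbarbd}, since $\bar a$ is the reduction of $a \notin I^m$). The idea is that a factorization of $x^{(m_x+q)d}\bar a$ as a product of $m_x+\bar m+q$ generators of $I$ can use at most $m_x + q + \lfloor \ord_x(\bar a)/d\rfloor$ copies of the pure power $x^d$ to absorb the $x$-part — actually, since $\bar a$ contributes no monomial divisible by any pure generator, every generator in the factorization other than copies of $x^d$ must ``touch'' a variable other than $x$ or must be a non-pure generator, and a counting of $x$-exponents shows that at least $\bar m - \lceil(\ord_x(\bar a)+1)/d\rceil \ge \bar m - \bar m = 0$... here is where the precise inequality in the hypothesis must be used. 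Concretely: the $x$-exponent $(m_x+q)d+\ord_x(\bar a)$ allows at most $m_x+q + \lfloor \ord_x(\bar a)/d \rfloor$ factors equal to $x^d$, and the hypothesis $\ord_x(\bar a) > \bar md-(m_x+\bar m+1)$, i.e. $\ord_x(\bar a) \ge \bar md - m_x - \bar m$, is exactly what is needed to make the count of ``leftover'' factors at most... I would chase this to conclude that dividing out the $x^d$ factors exhibits $\bar a$, or a monomial dividing $\bar a$ in all non-$x$ variables and with smaller $x$-exponent, inside $I^{\bar m}$ — contradicting $\bar a \notin I^{\bar m}$. (A clean way to organize this: pass to $S/(x)$ or set $x=1$, under which $a_x \mapsto \bar a|_{x=1}$ and the pure power $x^d$ dies, and track how many factors can be killed.)

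The main obstacle I anticipate is precisely this bookkeeping of how many generators in a monomial factorization of $x^{(m_x+q)d}\bar a$ inside $I^{m_x+\bar m+q}$ can be ``pure powers of $x$,'' since the other pure generators ($y_i^d$, $z_j^{d_j}$) and the non-pure $h_j$ could in principle also carry some $x$; one must be careful that the reduction $\bar a$ having all variable-exponents below the pure-generator thresholds genuinely prevents any generator in the factorization from being a $y_i^d$ or $z_j^{d_j}$ unless it is ``used up'' on the $x$-overshoot, and to make the arithmetic with the floors/ceilings line up with the stated strict inequality. I would also double-check the boundary behavior when $\ord_x(\bar a) \ge d$ (so $\bar a$ is not itself reduced with respect to $x^d$ — but note $x^d$ is a minimal generator, so this cannot happen: $\ord_x(\bar a) < d$ automatically), which in fact simplifies matters, since then $\lfloor \ord_x(\bar a)/d\rfloor = 0$ and the hypothesis reads $d > \ord_x(\bar a) \ge \bar md - m_x - \bar m$, forcing $\bar m$ small relative to $m_x$; I would exploit this to make the contradiction transparent.
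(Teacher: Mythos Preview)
Your overall plan --- reduce to Lemma~\ref{lem:common} by establishing $x^{dq}a_x \notin I^{m_x+\bar m+q}$ for all $q\geq 0$ --- is exactly the paper's strategy, and your observation that $a_x \notin I^{m_x+\bar m}$ (since otherwise multiplying by the $y$- and $z$-pure-power parts would put $a$ in $I^m$) is the correct base case. The gap is in your mechanism for the inductive step. You propose to factor $x^{(m_x+q)d}\bar a$ as a product of $N=m_x+\bar m+q$ monomials in $I$, strip off the factors divisible by $x^d$, and conclude that what remains exhibits $\bar a$ (or a divisor of it) inside $I^{\bar m}$. But stripping off the $j$ factors divisible by $x^d$ leaves a monomial whose $x$-exponent can still exceed $\ord_x(\bar a)$: you only know those $j$ factors together carry at least $jd$ copies of $x$, not all $(m_x+q)d$ of them. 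So the leftover monomial need not divide $\bar a$, and $\bar a\notin I^{\bar m}$ is not contradicted. Your floor/ceiling computation trails off precisely because this count does not close, and the suggestion to set $x=1$ or pass to $S/(x)$ does not help (the former makes $I$ the unit ideal; the latter annihilates $a_x$ whenever $\ord_x(\bar a)>0$ and in any case discards the $x$-exponent you are trying to control).

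The paper's route avoids this by isolating the right invariant. It observes that any monomial in $I^t\setminus x^dI^{t-1}$ is an \emph{exact} product of $t$ monomials of $I$, none divisible by $x^d$, hence has $\ord_x\leq t(d-1)$. Contrapositively: if $b\notin I^s$ and $\ord_x(b) > s(d-1)-1$, then $x^db\notin I^{s+1}$; moreover $\ord_x(x^db)=\ord_x(b)+d>(s+1)(d-1)-1$, so the hypothesis self-propagates and one gets $x^{dq}b\notin I^{s+q}$ for all $q\geq 0$ by a one-line induction. Now take $b=a_x$ and $s=m_x+\bar m$: since $\ord_x(a_x)=m_xd+\ord_x(\bar a)$, the condition $\ord_x(a_x)>(m_x+\bar m)(d-1)-1$ is exactly the hypothesis $\ord_x(\bar a)>\bar md-(m_x+\bar m+1)$. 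This is the missing idea --- bounding $\ord_x$ on $I^t\setminus x^dI^{t-1}$ rather than trying to peel factors off and land back at $\bar a$.
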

\begin{proof}
Consider the set of monomials $B_t = \{\text{monomials } b' \in I^t \backslash x^dI^{t-1}\}$.  
A monomial $b' \in B_t$  is a product of $t$ monomials in $I$, none of which is divisible by $x^d$, and thus satisfies $\ord_x(b') \leq t(d-1)$.  Thus if $b \notin I^s$ is a monomial such that $x^db \in I^{s+1}$, we have $x^db \in B_{s+1}$, and so $\ord_x(b) \leq (s+1)(d-1) - d = s(d-1) - 1$.  
Therefore, if $b \notin I^s$ is a monomial such that $\ord_x(b) > s(d-1) -1$, it follows that $x^db \notin I^{s+1}$.  Since $\ord_x(x^db) = \ord_x(b) + d > s(d-1) -1 + d > (s+1)(d-1) -1$, the same argument applies to $x^db$ to show that $x^{2d}b \notin I^{s+2}$, and inductively yields $x^{dq}b \notin I^{s+q}$ for all $q\geq 0$.  Again, $a\in w(I^m)$ implies that $a_x \notin I^{m_x+\bar{m}}$, and $\ord_x(a_x) = \ord_x(\bar{a}) + m_xd$, so under the hypothesis of the lemma, $\ord_x(a_x) > (m_x + \bar{m})(d-1)-1$.  Setting $b = a_x$ and $s = m_x+\bar{m}$ shows that $x^{dq}a_x \notin I^{m_x+\bar{m} + q}$ for all $q \geq 0$; again, apply Lemma \ref{lem:common}.
\end{proof}

With this, we can complete the proof of Theorem~\ref{thm:inc}.  

\begin{proof}
Suppose $e_{m+1} < e_m$.  Observe that Lemma \ref{lem:abar2} applies when $x$ is replaced by any $y_i$.  Relabel $x$ as $y_l$ (and include $y_l$ in $y$, and $y_l^d$ in $Y$ accordingly).  Then, for all $a \in w(I^m)$ and all $i$ with $1 \leq i \leq l$ it must be that $\ord_{y_i}(\bar{a}) \leq \bar{m}d - (m_{y_i} + \bar{m} + 1)$.  Summing over $i$, we have $$\ord_y(\bar{a}) \leq \bar{m}l(d-1) - l - \ord_Y(a)$$ and remembering that $\ord_Y(a) + \ord_Z(a) + \bar{m} = m$ we have $$\ord_y(\bar{a}) \leq \bar{m}l(d-1) - l - m + \bar{m} + \ord_Z(a)$$
Applying Proposition \ref{prop:mbarbd} (3), this implies $$\ord_y(\bar{a}) \leq \bar{m}(l-1)(d-1)-l-m - \deg(\bar{a}).$$ 
Adding $m-\ord_y(\bar{a})$ to each side of this inequality gives $$m \leq \bar{m}(l-1)(d-1) - (l-1) + \ord_z(\bar{a})$$ and combining this with the trivial bound on $\ord_z(\bar{a})$ yields $$m \leq \bar{m}(l-1)(d-1) - (n-1) + \sum_{i=1}^kd_i.$$
Finally, using that each $d_i \leq d-1$ gives $$m \leq (\bar{m}-1)(l-1)(d-1) +(n-1)(d-2)$$
and the theorem follows. 
\end{proof}

Since we know that $e_{m+1} \leq e_m$ for all $m \geq n$ by Corollary~\ref{cor:dec}, and that all $\mu_m <n$ by Proposition~\ref{prop:mbarbd}, we have Theorem~\ref{thm:simplebd}.

\section{Examples}
\label{sec:ex}

The following examples illustrate some subtleties of the above.  

\begin{subsection}{Necessity of the hypotheses of Theorem~\ref{thm:bd}}
Let $n=3$, $$J= \sum_{1\leq i,j,k \leq 3} \langle x_i^8, x_i^7x_j^2, x_i^7x_jx_k \rangle,$$ and $I = J + \mm^{10}$.  Then $I^2 \subset J \subset I$, $S/J$ is generated as a vector space by $x_1^6x_2^6x_3^6$, and $\omega(I^2)$ is generated by the image of $w(J)$ and $x_i^7x_j^6x_k^5$ for $(ijk) \in S_3$.  Note that $\reg I = 10$, $\reg J = \reg I^2 = 19$, and $w(J) \subsetneq = w(I'J) = w(I^2)$; incidentally, this affords a monomial example of a situation in which multiplying a given ideal (namely $J$) by a nontrivial ideal (namely $I'$) fails to induce the regularity to increase.  In the notation of Theorem~\ref{thm:bd}, we have $I'$ generated by  the monomials of degree $10$ not in $J$, $d=c'=8$, $c = 9$, $d'=10$; so we should have $e_m - e_{m-1} \leq 1$ for $m < 3$, while $e_m - e_{m-1} \leq 0$ for $m \geq 3$.  Indeed, $e_1 = 2$ and $e_2 = 3$, so the defect sequence in this case is
$$e=(2,3,3,\ldots)$$
\end{subsection}

\begin{subsection}{Initially increasing defect sequences}
\label{ex:inc}
Let $\dd = d\11$, and let $I=\mm^{\dd} + \mm^{d+b}$ (where $b \leq n(d - 1) -d$).  
As long as $$m(d+b) \leq \reg I_{\leq d} = \reg \mm^{\dd} = \11\cdot (\dd - \11) +1= n(d-1)+1,$$ $\omega(I^m)$ is generated by the image of $g(\mm^{m(d+b)-1}) \backslash \mm^{\dd})$, and so $\reg I^m = m(d+b)$.  
Hence for
$m \leq m_0 := \lfloor \frac{n(d-1) + 1}{d+b} \rfloor$, $e_m = mb$, so $e_m -e_{m-1} = b$.  For $m > m_0$, we have $e_m = e_{m+1}$.  Finally, $$e_{m_0+1} - e_{m_0} = \delta:= \max(n(d-1) +1 - m_0(d +b) -d,0) < b$$  Observe that this example exactly achieves the bound on the length of the increasing part of the sequence, as well as the bound on the first differences of the sequence, given in Corollary~\ref{cor:dec}.  Also observe that $\mu_m = m_0$ for large $m$, so if $b$ is small and $d$ large, $\mu_m = m-1$, although in this case the defect sequence never decreases.  

Here, the defect sequence is 
$$e = (b, 2b, 3b, \ldots, m_0 b, m_0 b + \delta, m_0 b + \delta, \ldots)$$
\end{subsection}

\begin{subsection}{Slowly decreasing defect sequences}
Let $$I= x^{d-1}\mm + (z_1^{d-1},\ldots, z_k^{d-1})$$  
Here $l=1$ and $k=n-1$.  $w(I)$ is generated by $x^{d-2} \left(\prod_i z_i\right)^{d-2}$; in fact, for $m <(n-1)(d-2)$, $w(I^m)$ is generated by $\{ x^{jd-j-1}\left(\prod_{i\in \Lambda}z_i^{d-1}\right) \left(\prod_i z_i\right)^{d-2}\}$ where $j \in[m]$, and $\Lambda$ is a multiset on $[k]$ with $|\Lambda|=m-j$.  Hence for $m < (n-1)(d-2)$, $e_{m+1} - e_m = -1$, while for larger $m$, $e_{m+1} - e_m = 0$.  Observe that this example exactly achieves the bound on the length of the decreasing part of the sequence given by Theorem~\ref{thm:inc} (as in this case $l-1 = 0$).  Observe also that if the $z_i$ variables were to instead appear to the power $d$, the defect sequence would stabilize immediately to $e_1 = e_{\infty} = n(d-1) -1$.  

Thus, the defect here is 
$$e = ((n-1)(d-2), (n-1)(d-2)-1, \ldots, 1,0,0,\ldots)$$

\end{subsection}

Careful consideration of such behavior leads to the following interesting characterization of $\mm$-primary monomial ideals with stable defect equal to $0$.  This extends the corresponding result in the equigenerated case obtained in \cite{EU10}.  

\begin{prop}
\label{prop:inftyeqz}
Let $I$ be an $\mm$-primary monomial ideal of $S$, with asymptotic generating degree $d$.  Then $e_{\infty}=0$ if and only if for each pure minimal generator $x^d$ of degree $d$, $x^{d-1}\mm \subset I$.  
\end{prop}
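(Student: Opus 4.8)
The plan is to prove the two implications separately, using the monomial witness machinery of Section~\ref{sec:mon} for the forward direction and a direct degree computation for the converse. For the converse, suppose that for every pure minimal generator $x^d$ of degree $d$ we have $x^{d-1}\mm \subset I$. I would show $e_m = 0$ for all large $m$ by exhibiting, for each $m$, a monomial of degree $dm-1$ not in $I^m$ together with the fact that $\mm \cdot (I^m)_{dm-1} \supseteq \mm^{dm}$, so that $\reg I^m = dm$. Concretely, using the normal form $I = (x^d, y_1^d,\ldots,y_{l-1}^d, z_1^{d_1},\ldots,z_k^{d_k},h_1,\ldots,h_r)$, the hypothesis says each of $x,y_1,\ldots,y_{l-1}$ has its $(d-1)$-st power times $\mm$ inside $I$; I would check that under this hypothesis every monomial of degree $\geq dm$ lies in $I^m$ (induct on $m$, peeling off a factor from $\{x^d,y_i^d\}$ or using the $z_j^{d_j}$, $h_k$), while a suitable monomial of degree $dm-1$ (e.g. built from the reduction of a witness for $I$ scaled up) stays outside $I^m$; hence $\reg I^m = dm$ and $e_\infty = 0$.

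For the forward direction, assume $e_\infty = 0$; equivalently $\reg I^m = dm$ for $m \gg 0$. Suppose toward a contradiction that some pure minimal generator $x^d$ of degree $d$ fails the condition, i.e. $x^{d-1}x_j \notin I$ for some variable $x_j$. The idea is that this produces, for every $m$, a monomial witness for $I^m$ of degree strictly exceeding $dm-1$, contradicting $e_m = 0$. I would relabel $x$ as one of the $y_i$ (as in the proof of Theorem~\ref{thm:inc}, so $x^d$ is among $Y$), and build a monomial $a \in w(I^m)$ whose reduction $\bar a$ has $\ord_x(\bar a) = d-1$ together with a factor $x_j^{1}$ witnessing $\bar a \notin I$; more precisely I would take $a = x^{d(m-1)}\cdot x^{d-1}x_j^{\,e}$ for an appropriate small exponent $e$ chosen so that $x^{d-1}x_j^{\,e}$ is a socle element of $S/I$ (which exists because $x^{d-1}\mm \not\subset I$ forces some such monomial into the socle) and so that $a$ is a genuine socle element of $S/I^m$. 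Since $\ord_x(\bar a) = d-1$ and $\bar m = 1$ in this construction, the hypothesis $\ord_x(\bar a) > \bar m d - (m_x + \bar m + 1) = d - (m-1) - 2$ of Lemma~\ref{lem:abar2} holds once $m$ is large, so that lemma (applied with $x$ in place of the designated variable) would give $e_{m'} \geq e_m$ for $m' \geq m$; combined with $\deg(a) = dm-1 + (d-1) + (e-1) \cdot(\text{something}) > dm - 1$ whenever the socle monomial $x^{d-1}x_j^{\,e}$ has degree $\geq d$, this forces $e_m \geq 1$, contradicting $e_\infty = 0$.

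The main obstacle I anticipate is the construction in the forward direction of a monomial $a \in w(I^m)$ that simultaneously (i) is a true socle element of $S/I^m$, (ii) has $x$-order of its reduction large enough to trigger Lemma~\ref{lem:abar2}, and (iii) has degree exceeding $dm-1$. Getting (i) requires care: failing $x^{d-1}\mm \subset I$ only tells us some monomial $x^{d-1}x_j$ is outside $I$, but it may not itself be in the socle of $S/I$, so one must take the "maximal" such monomial $b$ with $x^{d-1} \mid b$, $b \notin I$, $\mm b \subset I$ — it exists since $S/I$ is artinian — and then argue that $x^{d(m-1)}b$ (possibly further multiplied up) is in the socle of $S/I^m$, which needs Proposition~\ref{prop:regup}/\ref{prop:sdisj}-style reasoning to control $\mm \cdot x^{d(m-1)}b \subset I^m$. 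Once this monomial is in hand, the degree bookkeeping $\deg(x^{d(m-1)}b) = d(m-1) + \deg(b) \geq d(m-1) + d = dm > dm - 1$ gives the contradiction, and the monotonicity from Lemma~\ref{lem:abar2} is only needed to know the defect cannot later drop back to $0$ — alternatively one can skip Lemma~\ref{lem:abar2} entirely and just observe directly that $\reg I^m \geq \deg(x^{d(m-1)}b) + 1 \geq dm + 1$, so $e_m \geq 1$ for all large $m$, contradicting $e_\infty = 0$ outright. I would present the argument in this streamlined form, reserving Lemma~\ref{lem:abar2} as the conceptual explanation.
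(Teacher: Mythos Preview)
Your forward direction is essentially the paper's, though you make it harder than necessary. The paper simply takes any monomial $a\in x^{d-1}\mm\setminus I$ (not a socle element): since $x^{d-1}x_j\notin I$ forces every minimal generator of $I$ dividing $x^{dq}a$ to be $x^d$ itself, one gets $x^{dq}a\notin I^{q+1}$ immediately, and $\deg(x^{dq}a)=d(q+1)$ gives $e_{q+1}\geq 1$ for all $q$. Your detour through socle elements and Lemma~\ref{lem:abar2} is unneeded, and in your ``streamlined'' form you still leave the key step $x^{d(m-1)}b\notin I^m$ unjustified.

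The real gap is in the converse. Your plan is to show $\mm^{dm}\subset I^m$ by an induction that peels off a pure-power factor from any monomial of degree $\geq dm$. But this statement is \emph{false} for small $m$: take the paper's Example~4.3, $I=x^{d-1}\mm+(z_1^{d-1},\ldots,z_k^{d-1})$, which satisfies the hypothesis (the only degree-$d$ pure generator is $x^d$, and $x^{d-1}\mm\subset I$), yet $e_1>0$, so $\mm^d\not\subset I$; concretely $x\,z_1\cdots z_k$ has degree $\geq d$ but admits no pure-power factor in $I$ and is not divisible by any $h_j$. Thus your induction cannot get off the ground, and there is no evident way to start it at an unspecified large $m_0$ without already knowing $\mm^{dm_0}\subset I^{m_0}$, which is exactly the conclusion. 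The paper proves this direction by a different route: it invokes the Section~\ref{sec:mon} machinery (Lemma~\ref{lem:abar2} and the proof of Theorem~\ref{thm:inc}) to show that for $q\gg 0$ one can choose a witness $a\in w(I^q)$ lying outside $Y+Z$ with respect to some degree-$d$ pure generator $x^d$; the hypothesis $x^{d-1}\mm\subset I$ then forces every such monomial of degree $\geq dq$ into $I^q$, so this witness has degree $\leq dq-1$ and $e_q=0$.
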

\begin{proof}
If $x^{d-1}\mm \not \subset I$, let $a \in (x^{d-1}\mm) \backslash I$.  Then $x^{dq}a \notin I^{q+1}$ for all $q \geq 0$, so $e_q \geq \deg(x^{dq}a) - ((q+1)d-1) = 1$ for all $q \geq 0$ and $e_{\infty} \geq 1$.  

On the other hand, if $x^{d-1}\mm \subset I$ for each pure minimal generator $x^d$ of degree $d$, then for large $q$ every monomial in $\mm^{dq}$ outside of $Y + Z$ will lie in $I^q$.  By the proofs of Lemma~\ref{lem:abar2} and Theorem~\ref{thm:inc}, for sufficiently large $q$ there exists a witness $a \in w(I^q)$ lying outside of $Y+Z$ relative to some pure minimal generator $x^d$ of degree $d$.  This witness has degree at most $dq-1$, so $e_q = 0$.  Hence $e_{\infty} = 0$ also.  
\end{proof}

\begin{subsection}{Initially increasing then later decreasing defect sequences}
Let 
$$I = \sum_{i=1}^n (x_i^{d-1}\mm) + \mm^{d+b}$$
where $n>2$.  When $d$ is large and $b$ is small, the defect sequence must initially increase according to $b$ with each power of $I$, as in Example~\ref{ex:inc}; however, by Proposition~\ref{prop:inftyeqz}, $e_{\infty} = 0$.

Example 2.4 of \cite{EU10} is the case $n=4$, $d=5$, $b=1$, where $$e=(1,2,2,1,1,1,1,1,0,0,\ldots)$$
while for $n=4$, $d=5$, $b=2$ we have $$e=(2,3,2,2,2,2,2,1,0,0,\ldots)$$ 
and for $n=4$, $d=6$, $b=2$ the defect sequence begins with $(2,4,4,3,\ldots)$, but $e_{\infty}=e_{12}=0$.  

\end{subsection}

\section*{Acknowledgments}
Many thanks to Adam Boocher for the idea of Propsition~\ref{prop:regup}, and to David Eisenbud for general and indispensable guidance.

\nocite{Co06, EHU06, Chan97}

\bibliography{regdef}
\bibliographystyle{math}

\end{document}